\newtheorem{theorem}{Theorem}[section]
\newtheorem{lemma}[theorem]{Lemma}
\def\eps{\varepsilon}
\def\la{\lambda}
\def\a{\alpha}
\def\be{\beta}
\def\ga{\gamma}
\def\part{\partial}
\def\b1{\bold 1}
\newcommand{\beq}{\begin{equation}}
\newcommand{\eeq}{\end{equation}}
\theoremstyle{remark}
\numberwithin{equation}{section}
\date{\today}
\begin{document}

\title[Perfect partitions]{Perfect partitions of a random set of integers}  
\author{Boris Pittel}
\address{Department of Mathematics, The Ohio State University, $231$ West $18$-th Avenue, Columbus, Ohio $43210-1175$ (USA)}
\email{bgp@math.ohio-state.edu}

\keywords
{random, integer, partitions, discrepancy, enumeration, asymptotic, distribution}

\subjclass[2010] {05A05, 05A15, 05A16, 05C05, 06B05, 05C80, 05D40, 60C05}
\begin{abstract} Let $X_1,\dots, X_n$ be independent integers distributed uniformly on $\{1,\dots, M\}$, $M=M(n)\to\infty$ however slow. A partition $S$ of $[n]$ into
$\nu$ non-empty subsets $S_1,\dots, S_{\nu}$ is called perfect, if all $\nu$ values $\sum_{j\in S_{\a}}X_j$ are equal. For a perfect partition to exist, $\sum_j X_j$ has to be divisible by $\nu$.
For $\nu=2$, Borgs et al. proved, among other results, that, conditioned on $\sum_j X_j$ being even, with high  probability a perfect partition exists if 
$\kappa:=\lim \tfrac{n}{\log M}>\tfrac{1}{\log 2}$, and that w.h.p. no perfect partition exists if $\kappa<\tfrac{1}{\log 2}$. We prove that w.h.p. no perfect partition exists if $\nu\ge 3$ and  $\kappa<\tfrac{2}{\log \nu}$. We identify the range of $\kappa$ in which the expected number of perfect partitions is exponentially high. We show that for $\kappa> \tfrac{2(\nu-1)}{\log[(1-2\nu^{-2})^{-1}]}$ the total number of perfect partitions is exponentially high with probability $\gtrsim (1+\nu^2)^{-1}$.
\end{abstract}

\dedicatory{In loving memory of Irina Pittel, my soulmate for sixty years.}

\maketitle

\section{Introduction and main result} To liberally cite from Wikipedia \cite{Par} the partition problem is to determine if a given multiset of integers can be partitioned into two subsets such that the sums of numbers in the subsets are the same. An optimization version of the partition problem is to partition the multiset into two subsets such that the absolute value of the difference between the sums is minimized. In the multiway partition problem, there is an integer $\nu$ and the problem is to determine whether the multiset can be partitioned into $\nu$ subsets, each of the same total sum. We are interested in the case when the integers to be partitioned are random.

Specifically  we have integers $X_1,\dots, X_n$, which are independent, each distributed uniformly on
$[M]:=\{1,\dots, M\}$. Given $ \nu\ge 2$, consider an ordered partition of $[n]$ into non-empty subsets $S_1,\dots, S_{ \nu}$,
which we denote $\bold S$. 
 A partition $\bold S$ of $[n]$ 
 is called {\it perfect\/} if $\sum_{j\in S_{\a}}X_j\equiv \tfrac{1}{\nu}\sum_{j\in [n]}X_j$, $\a\in [\nu]$. For a perfect partition to exist, it is necessary that $\sum_{j\in [n]}X_j$ is divisible by $\nu$. 
 For $ \nu=2$, Borgs, Chayes, and Pittel \cite{BCP} proved the following result, among a series of other, sharper claims. Conditioned on $\sum_j X_j$ even, with high probability, a perfect partition exists (does not exist) if $\lim\tfrac{n}{\log M}>\tfrac{1}{\log 2}$ (if $\lim\tfrac{n}{\log M}<\tfrac{1}{\log 2}$). In a follow-up paper Borgs, Chayes, Mertens, and Pittel \cite{BCMP} analyzed a constrained version, when the cardinalities of the two sets are given. A constrained version with $k$ sets of the same cardinality was analyzed later by Bovier and Kurkova \cite{Bov}. We refer the reader to Graham \cite{Gra}, Gent and Walsh \cite{Gen},  Hayes \cite{Hay}, Karmarkar, Karp, Lueker, and Odlyzko \cite{Kar}, Mertens \cite{Mer}, and 
 Yakir \cite{Yak}, 
for the long history of the problem and its various aspects, from computational issues to conceptual connections with statistical physics.

In 2021 Tim Varghese and George Varghese \cite{Var} posed a problem of extending probabilistic analysis of perfect partitions into two subsets to multiway partitions, i.e. partitions of a set of random integers into $\nu>2$ subsets with the same sum of integers in each subset. Here is what we have found.
\begin{theorem}\label{main} Let $\nu\ge 3$. Suppose $M=M(n)\to\infty$, $M=exp(O(n))$. {\bf (1)\/} 
Expected number of perfect partitions tends to zero, if $\lim \tfrac{n}{\log M}<\tfrac{2}{\log \nu}$. {\bf (2)\/} Expected number of perfect partitions is exponentially large 
{\bf(a)\/} for $\nu=3$ and $\lim \tfrac{n}{\log M}\in \bigl(\tfrac{2}{\log 3},\infty\bigr)\setminus\{2,4\}$,  
and {\bf (b)\/} for $\nu\ge 4$ and $\lim \tfrac{n}{\log M}\in \bigl(\tfrac{2}{\eta(\nu)},\infty\bigr)\setminus\{2(\nu-1)\}$, $\eta(\nu)$ being a single root of $\tfrac{e^{(\nu-1)\eta}}{\nu e\eta}-1$ in $\bigl(\tfrac{1}{\nu-1}, \tfrac{2\log\nu}{\nu-1}\bigr)$.
\end{theorem}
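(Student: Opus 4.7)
The plan is to compute $\E Z_\nu$ via a Fourier representation. Writing $T_\alpha := \sum_{j\in S_\alpha} X_j$, linearity of expectation gives
\[
\E Z_\nu = \sum_{\mathbf{n}} \binom{n}{\mathbf{n}}\,p(\mathbf{n}), \qquad p(\mathbf{n}) := P(T_1 = \cdots = T_\nu),
\]
summed over compositions $\mathbf{n}=(n_1,\dots,n_\nu)$ of $n$ with $n_\alpha\ge 1$, $p(\mathbf{n})$ depending only on the sizes. Using $\mathbf{1}[k=0]=\int_0^1 e^{2\pi ik\theta}\,d\theta$ for $k\in\mathbb{Z}$ and independence of the $X_j$,
\[
p(\mathbf{n}) = \int_{[0,1]^{\nu-1}}\prod_{\alpha<\nu}\chi(2\pi\theta_\alpha)^{n_\alpha}\,\chi\Bigl(-2\pi\sum_{\alpha<\nu}\theta_\alpha\Bigr)^{n_\nu}\,d\theta,
\]
with $\chi(u)=M^{-1}\sum_{k=1}^M e^{iku}$ the characteristic function of $\mathrm{Unif}[M]$. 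Summing over $\mathbf{n}$ via the multinomial identity (absorbing a negligible inclusion-exclusion correction for $n_\alpha\ge 1$) yields $\E Z_\nu\sim\int_{[0,1]^{\nu-1}} F_\nu(\theta)^n\,d\theta$, where $F_\nu(\theta)=\sum_{\alpha<\nu}\chi(2\pi\theta_\alpha)+\chi\bigl(-2\pi\sum_{\alpha<\nu}\theta_\alpha\bigr)$.

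For part~(1), I would avoid evaluating this integral sharply and instead use only two of the $\nu-1$ equalities. If $\nu\ge 4$, choose disjoint pairs $(1,2)$ and $(3,4)$: the differences $T_1-T_2$ and $T_3-T_4$ involve disjoint $X_j$'s, so $p(\mathbf{n})\le P(T_1=T_2)P(T_3=T_4)$, each factor $O\bigl(M^{-1}(n_\alpha+n_\beta)^{-1/2}\bigr)$ by a one-dimensional local CLT uniform in sizes. For $\nu=3$ use instead the two-dimensional LCLT for $(T_1-T_3,T_2-T_3)$: its covariance determinant is $\sigma^4(n_1n_2+n_1n_3+n_2n_3)$, yielding $p(\mathbf{n})\le C\sigma^{-2}(n_1n_2+n_1n_3+n_2n_3)^{-1/2}$. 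In either case summing $\binom{n}{\mathbf{n}}\,p(\mathbf{n})$ over $\mathbf{n}$ and using $\sum_{\mathbf{n}}\binom{n}{\mathbf{n}}\le\nu^n$ gives $\E Z_\nu=O\bigl(\nu^n/(M^2 n)\bigr)$, which vanishes precisely when $\kappa<2/\log\nu$.

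For part~(2), I would apply the Laplace (saddle-point) method directly to $\int F_\nu^n\,d\theta$. The modulus $|F_\nu|$ attains its unique global maximum $\nu$ at $\theta=0$, where a Taylor expansion gives
\[F_\nu(\theta)=\nu-2\pi^2\E(X^2)\bigl[\|\theta\|^2+\bigl(\textstyle\sum_\alpha\theta_\alpha\bigr)^2\bigr]+O(\|\theta\|^3),\]
with the imaginary linear terms canceling by construction. The quadratic-form matrix $I+\mathbf{1}\mathbf{1}^\top$ has determinant $\nu$, so the Gaussian window around $\theta=0$ contributes $\nu^n\cdot C_\nu/(M\sqrt{n})^{\nu-1}$, exponentially large iff $\nu^n/M^{\nu-1}$ is. For $\nu=3$ this matches the threshold $\kappa>2/\log 3$ asserted in (2a). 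For $\nu\ge 4$, turning the heuristic into a rigorous \emph{lower} bound requires uniformly controlling $|F_\nu(\theta)|^n$ outside the saddle window; the weaker threshold $\kappa>2/\eta(\nu)$ of (2b) reflects the best available uniform control, with the transcendental equation $e^{(\nu-1)\eta(\nu)}=\nu e\eta(\nu)$ balancing the peripheral contributions $|F_\nu|^n$ against the Gaussian prefactor at the saddle. Excluded critical values ($\{2,4\}$ for $\nu=3$ and $2(\nu-1)$ for $\nu\ge 4$) are precisely where the polynomial prefactor cancels the leading exponential up to lower-order terms.

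\textbf{Main obstacle.} The delicate step is the rigorous lower bound in (2b) for $\nu\ge 4$: controlling the torus integral outside the Gaussian window. This needs sharp uniform estimates on $|\chi|$ along the ``ridges'' of $|F_\nu|$ and a stratification of the $(\nu-1)$-torus by which subset of coordinates $\theta_\alpha$ are $O\bigl(1/(M\sqrt{n})\bigr)$; it is this stratified analysis that produces the transcendental condition defining $\eta(\nu)$ and forces the exclusion of the critical values of $\kappa$.
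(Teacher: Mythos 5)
Your overall strategy coincides with the paper's: write $\E[Z_n]$ as a Fourier integral, localize around the maximum of the modulus of the integrand, and estimate the remainder by stratifying the domain according to which coordinates fall near the peak. The two noticeable departures are (i) your parameterization and (ii) your treatment of part (1). You integrate over the $(\nu-1)$-torus by encoding only the independent constraints $T_\alpha=T_\nu$, so the maximum of $|F_\nu|$ is a single point $\theta=\mathbf 0$; the paper instead uses $\nu$ redundant constraints, which produces an integral over $[-\pi,\pi]^{\nu}$ whose integrand peaks along a family of line segments parallel to $(1,\dots,1)$, forcing a separate combinatorial computation of the total in-cube length of those segments (Lemma \ref{lem1.6}, value $2\pi\nu^{\nu-3/2}$). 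Your reduction avoids that enumeration entirely, and the ``inclusion-exclusion correction'' you worry about is in fact exactly zero, since $X_j\ge 1$ forces $p(\mathbf n)=0$ whenever some $n_\alpha=0$. For part (1) you propose a genuinely different and more elementary route (a local CLT applied to a subset of the defining equalities); the paper instead extracts part (1) from the same remainder machinery it builds for part (2). Your LCLT route does deliver $\E[Z_n]=O(\nu^n/M^2)$ and hence the $\kappa<2/\log\nu$ threshold, though it needs a uniformity-in-$(M,\mathbf n)$ check that you only assert.

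The real gap is the one you flag yourself: for part (2), and especially for $\nu\ge4$, you give no argument that the integral outside the Gaussian window is dominated by the saddle contribution. That estimate is not a routine add-on; it is the bulk of the paper's proof (Lemma \ref{lem2.55} and the subsequent parts {\bf (II)}--{\bf (IV)}). The paper stratifies the cube by $a(\bold x)$, the number of coordinates $\a$ with $|\xi_\a|\le b_0(M)/M$, makes a change of variables that deliberately keeps one coordinate $x_\mu$ un-rescaled with $\mu$ chosen {\it depending on the multi-index $\bold r$\/}, and after a multinomial expansion and optimization over $a$ obtains the three-regime bound \eqref{3.501}. It is precisely the middle regime $\lim\frac{N}{\log M}\in(1,\nu-1)$, with bound $\bigl(\frac{\nu N e^{O(b^{-2})}}{e\log M}\bigr)^N$, that produces the transcendental equation $e^{(\nu-1)\eta}=\nu e\eta$ defining $\eta(\nu)$ when compared with the main term $\nu^{2N}/M^{\nu-1}$; and the excluded values $\lim\frac{n}{\log M}\in\{2,2(\nu-1)\}$ are exactly the boundaries between the three regimes, where the exponential comparison is inconclusive. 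None of this appears in your proposal beyond a statement of intent, so as written the proof of (2), including the case $\nu=3$ (which also requires the middle-regime bound on $(1,2)$), is not established.
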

\noindent We conjecture that $\lim \tfrac{n}{\log M}=2(\nu-1)$ for $\nu \ge 3$, and $\lim \tfrac{n}{\log M}=2$ for $\nu=3$ need not be excluded for {\bf (2)\/} to hold. What happens when $\nu>3$ and $\lim\tfrac{n}{\log M}\in \bigl(\tfrac{2}{\log\nu}, \tfrac{2}{\eta(\nu)}\bigr)$? 
Our second result is
\begin{theorem}\label{second} Suppose $\nu\ge 3$. If $\lim\tfrac{n}{\log M}>\tfrac{2(\nu-1)}{\log[(1-2\nu^{-2})^{-1}]}$, then with the
limiting probability $\gtrsim (1+\nu^2)^{-1}$ the total number of perfect partitions is exponentially large, of the same order of magnitude as the expected number of perfect partitions. 
\end{theorem}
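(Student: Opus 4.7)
The plan is to apply the second moment method to the number $Z=Z_n$ of perfect ordered $\nu$-partitions of $[n]$. Theorem~\ref{main}(2) already guarantees that under the present hypothesis (which is strictly stronger than the first-moment condition, since $\log[(1-2\nu^{-2})^{-1}]<1$), $\E[Z]$ grows exponentially in $n$. By the Paley--Zygmund inequality, for each $\theta\in(0,1)$,
\[
\Pr\bigl(Z\ge\theta\,\E[Z]\bigr)\ge(1-\theta)^2\,\frac{(\E[Z])^2}{\E[Z^2]},
\]
so the theorem will follow once we prove
\[
\E[Z^2]\le\bigl(1+\nu^2+o(1)\bigr)(\E[Z])^2;
\]
letting $\theta\downarrow 0$ and invoking the exponential growth of $\E[Z]$ then gives, with limiting probability $\ge(1+\nu^2)^{-1}$, that $Z$ is of the same exponential order as $\E[Z]$.

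To estimate $\E[Z^2]=\sum_{(\bs S,\bs T)}\Pr(\bs S,\bs T\text{ both perfect})$, we organize the double sum by the $\nu\times\nu$ overlap matrix $A=(a_{ij})$, $a_{ij}:=|S_i\cap T_j|$, whose row sums are $|S_i|$ and column sums are $|T_j|$. The ``both perfect'' event is a system of $2(\nu-1)$ linear equations on the $X_j$; Fourier inversion (say on $\mathbb{Z}/K$ with $K$ a large multiple of $\nu$, or equivalently on the circle) expresses its probability as a $2(\nu-1)$-dimensional character sum whose integrand factors over the $\nu^2$ cells $S_i\cap T_k$: each $X_j$ with $j\in S_i\cap T_k$ contributes a copy of the characteristic function of the uniform law on $[M]$, evaluated at the sum of the $i$-th $\bs S$-frequency and the $k$-th $\bs T$-frequency.

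The principal contribution comes from overlap matrices concentrated near one of two critical shapes: the ``product'' shape $a_{ij}\approx n/\nu^2$, and (for partitions with nearly balanced parts $|S_i|\approx|T_j|\approx n/\nu$) the ``permutation'' shapes $a_{ij}\approx(n/\nu)\,\mathbf{1}[j=\pi(i)]$ for $\pi\in\mathrm{Sym}_\nu$. A Laplace expansion of the character integral around the zero frequency, carried out at each of these shapes, should produce a combined contribution of order $(1+\nu^2+o(1))(\E[Z])^2$, the constant reflecting a crude accounting of the perturbative second-order corrections accumulated from the $\nu^2$ cells of the product shape together with the symmetry-related permutation contributions. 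For every other overlap pattern one seeks a uniform spectral bound forcing the product of characteristic functions to be at most $(1-2\nu^{-2})^{n(1+o(1))}$; combined with the crude enumeration $\lesssim M^{2(\nu-1)}$ of the relevant frequency modes, the ``bad'' total is dominated by $M^{2(\nu-1)}(1-2\nu^{-2})^n$, which is $o((\E[Z])^2)$ precisely when $n/\log M>2(\nu-1)/\log[(1-2\nu^{-2})^{-1}]$.

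The main technical obstacle is establishing this uniform geometric estimate $(1-2\nu^{-2})^{n(1+o(1))}$ across all overlap matrices bounded away from the two critical shapes, together with the matched Laplace expansion around the critical shapes delivering the constant $1+\nu^2+o(1)$. The value $1-2\nu^{-2}$ should emerge as the spectral radius, outside of the two-dimensional subspace of unit eigenvalues associated with the product and permutation shapes, of a natural $\nu^2\times\nu^2$ matrix built from the second-order behavior of the characteristic function; the core difficulty will be to promote this spectral gap into a uniform product bound over the $n$ independent factors and to manage the transition regime, where the overlap is close to but not exactly at one of the critical shapes, through a delicate comparison of the local Laplace analysis with the global spectral bound.
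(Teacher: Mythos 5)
Your high-level strategy is sound and matches the paper's: represent $\Bbb E[Z_n^2]$ exactly, show $\Bbb E[Z_n^2]\le(1+o(1))\nu^2\,\Bbb E^2[Z_n]$, and finish with a one-sided moment inequality (you use Paley--Zygmund, the paper uses Cantelli, but the limiting bound $(1+\nu^2)^{-1}$ comes out the same). However, everything between those two endpoints is left as an aspiration, and the specific route you sketch diverges from what actually makes the argument close.

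First, you organize the second moment by the $\nu\times\nu$ overlap matrix $A=(|S_i\cap T_j|)$ and then propose Fourier analysis conditioned on each $A$. The paper never conditions on the overlap: it works directly with the exact Fourier representation
\[
\Bbb E[Z_n^2]=\frac{1}{(2\pi)^{2\nu}}\int_{\bold x,\bold x'}\Bigl(\sum_{\a,\be\in[\nu]}\phi\bigl(y_\a(\bold x)-y_\be(\bold x')\bigr)\Bigr)^{n}d\bold x\,d\bold x',
\]
and decomposes the domain $[-\pi,\pi]^{2\nu}$ geometrically, not the set of pairs of partitions combinatorially. Your two-step decomposition (overlap type, then Fourier per type) requires a uniform estimate over an $O(n^{\nu^2})$-sized family of overlap matrices; you do not indicate how to control that sum, how to identify the ``critical shapes'' rigorously, or how to resolve the transition regime. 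That is precisely where the paper's approach pays off: it replaces all of that with a single split of the Fourier cube into a region where at least one of the $\nu^2$ summands $\phi(y_\a(\bold x)-y_\be(\bold x'))$ has modulus $\le 1-\eps\nu^2$ (giving a crude $\nu^{2n}(1-\eps)^n$ bound) and its complement, which is then analyzed by a Laplace expansion over a finite family of line segments.

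Second, your proposed origin of the two key constants does not match the mechanism in the paper and is not substantiated. The $1-2\nu^{-2}$ (the paper's working version is really $1-\eps$ with $\eps$ close to $\nu^{-2}$) does not arise as the spectral radius of any $\nu^2\times\nu^2$ matrix; it arises from the elementary observation that if even one of the $\nu^2$ terms $|\phi(y_\a(\bold x)-y_\be(\bold x'))|$ drops below $1-\eps\nu^2$, then the whole sum is at most $\nu^2(1-\eps)$. The factor $\nu^2$ in front of $\Bbb E^2[Z_n]$ also does not come from a perturbative accounting of product/permutation overlap shapes: it comes from the explicit count of line segments that carry the Laplace mass. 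In the first-moment estimate the lines through the cube are $\mathcal L_{\bold k}$ with $x_\a=\nu^{-1}(t+k_\a\pi)$, with total in-cube length $2\pi\nu^{\nu-3/2}$; in the second-moment estimate the constraint $y_\a(\bold x)-y_\be(\bold x')\approx 2\pi\mathbb{Z}$ for all $\a,\be$ forces $\bold x,\bold x'$ near finer lines $x_\a=\nu^{-1}(t+\tfrac{\pi}{\nu}k_\a)$, whose total length is $2\pi\nu^{\nu-1/2}$; squaring the ratio gives exactly $\nu^2$. There is also a nontrivial coupling constraint $k_\a-\kappa_\be\equiv 0\ (\mathrm{mod}\ 2\nu)$ which the paper handles by relaxing to marginal constraints — none of this appears in your sketch. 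In particular, your ``permutation shapes'' have overlap roughly $n/\nu$ with one pair of parts; such pairs of partitions contribute on the order of $\Bbb E[Z_n]$, which is exponentially smaller than $\Bbb E^2[Z_n]$, so they cannot account for the constant.

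In short: the skeleton (second moment plus a Chebyshev-type lower bound) is right, but the decomposition, the spectral-gap heuristic, and the Laplace accounting that you propose are either unproved or misdirected. What you would need to supply is (i) a clean split of the Fourier domain analogous to the paper's $D_\eps$/$D_\eps^c$, with a crude $(1-\eps)^n$ bound on the bad part; (ii) the reduction on $D_\eps^c$ showing that $y_\a(\bold x)-y_\be(\bold x')$ is forced near $2\pi\mathbb{Z}$ for all $\a,\be$ simultaneously, and that this decouples into $\bold x$ near one finer line family and $\bold x'$ near another, with the divisibility coupling; and (iii) the line-length count $2\pi\nu^{\nu-1/2}$ replacing the first-moment count $2\pi\nu^{\nu-3/2}$, producing the $\nu^2$. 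None of these are present in your proposal.
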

\noindent The lower bound $(1+\nu^2)^{-1}$ is below $\nu^{-1}\!=\!\lim_{M\to\infty} \Bbb P\bigl(\sum_{j\in [n]} X_j\!\equiv\! 0(\text{mod }\nu)\bigr)$, uniformly for $n\ge 1$. (A simple, elegant proof of the latter was conveyed to me by Huseyin Acan \cite{Aca}.)
\section{Integral formulas}
\begin{lemma}\label{newlem1} {Suppose $\nu>2$. Let $Z_n$ denote the random total number of all perfect ordered partitions of $[n]$ into $ \nu$ non-empty
subsets. {\bf (1)\/} Then 
\begin{equation}\label{new1}
\begin{aligned}
&\,Z_n=\frac{1}{(2\pi)^{ \nu}}\!\!\!\int\limits_{\bold x\in [-\pi,\pi]^{ \nu}}\prod_{j\in [n]}f(\bold x,X_j)\,\,d\bold x, \quad \bold x:=\{x_1,\dots,x_{\nu}\},\\
&f(\bold x,\eta):=\sum_{\a\in [\nu]}\exp\bigl(iy_{\a}\eta\bigr),\quad y_{\a}=y_{\a}(\bold x):=\nu x_{\a}-\sum_{\be\in [\nu]}x_{\be}.\\
\end{aligned}
\end{equation}
Consequently, denoting by $X$ a random variable distributed as each of $X_j$, we have
\begin{equation}\label{new1.5}
\Bbb E\bigl[Z_n\bigr]
=\!\frac{1}{(2\pi)^{\nu}}\int_{\bold x\in [-\pi,\pi]^{\nu}}\Bbb E^n\bigl[
f(\bold x,X) \bigr]d\bold x.
\end{equation}
{\bf (2)\/} Furthermore
\begin{equation}\label{new1.51}
\begin{aligned}
\Bbb E[Z_n^2]&=\frac{1}{(2\pi)^{ 2\nu}}\!\!\!\int\limits_{\bold x, \bold x'}\Bbb E^n\bigl[f(\bold x,X)
\overline{f(\bold x',X)}\bigr]\,\,d\bold x d\bold x'.
\end{aligned}
\end{equation}
}
\end{lemma}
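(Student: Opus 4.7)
My strategy is Fourier detection of the perfect-partition event $\{s_\alpha = T/\nu \text{ for all } \alpha \in [\nu]\}$, where $s_\alpha := \sum_{j \in S_\alpha} X_j$ and $T := \sum_{j \in [n]} X_j$. The underlying identity is the orthogonality
\[
\frac{1}{2\pi}\int_{-\pi}^{\pi} e^{i k x}\,dx \;=\; \mathbf{1}\{k=0\}, \qquad k \in \mathbb{Z},
\]
and the specific factor $\nu$ in $y_\alpha = \nu x_\alpha - \sum_\beta x_\beta$ is tailored so that integration pins down $\nu s_\alpha = T$ rather than $s_\alpha = T$ (which would be vacuous).

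First, I would distribute each of the $\nu$-term sums defining $f(\mathbf{x}, X_j)$:
\[
\prod_{j \in [n]} f(\mathbf{x}, X_j) \;=\; \sum_{\sigma: [n] \to [\nu]} \exp\!\Bigl(i \sum_{j \in [n]} y_{\sigma(j)}(\mathbf{x})\, X_j\Bigr).
\]
Setting $S_\alpha := \sigma^{-1}(\alpha)$ and using $\sum_\alpha y_\alpha = 0$ (immediate from the definition of $y_\alpha$), the exponent regroups as $\sum_\alpha y_\alpha s_\alpha = \sum_\alpha (\nu s_\alpha - T)\, x_\alpha$. Since each $\nu s_\alpha - T$ is an integer, the $\mathbf{x}$-integral factorizes into $\nu$ copies of the orthogonality relation, producing the indicator $\prod_\alpha \mathbf{1}\{\nu s_\alpha = T\}$.

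Second, I would observe that summing these indicators over all assignments $\sigma: [n] \to [\nu]$ yields exactly the count of perfect ordered partitions: an assignment with some empty class $S_\alpha$ would require $s_\alpha = 0 = T/\nu$, but $X_j \geq 1$ forces $T \geq n \geq 1$, so such assignments contribute nothing, and the surviving $\sigma$'s are in bijection with perfect ordered partitions into $\nu$ non-empty parts. This establishes the pointwise identity \eqref{new1}. Passing to \eqref{new1.5}, independence of the $X_j$ factorizes $\Bbb E\bigl[\prod_j f(\mathbf{x}, X_j)\bigr] = \Bbb E^n[f(\mathbf{x}, X)]$; the bound $|f(\mathbf{x}, \cdot)| \leq \nu$ on the bounded box $[-\pi,\pi]^\nu$ legitimizes Fubini.

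For part (2), since $Z_n$ is real I would write $Z_n^2 = Z_n \cdot \overline{Z_n}$, expressing one factor by \eqref{new1} and the other by its complex conjugate, which simply replaces $f(\mathbf{x}', X_j)$ with $\overline{f(\mathbf{x}', X_j)}$ inside the product. Taking expectations and invoking independence once more factorizes the joint expectation into the $n$-th power of $\Bbb E\bigl[f(\mathbf{x}, X)\overline{f(\mathbf{x}', X)}\bigr]$, which is \eqref{new1.51}. Overall there is no substantive obstacle; the only subtlety worth flagging is the automatic exclusion of empty-class assignments, and that reduces to positivity of the $X_j$.
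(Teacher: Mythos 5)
Your proof is correct and follows essentially the same Fourier-orthogonality argument as the paper, merely traversed in the opposite direction: the paper builds the integral from the indicator $\Bbb I(\bold S\text{ perfect})$ and then extends the sum to ``relaxed'' partitions with possibly empty parts, while you expand the integrand into a sum over all maps $\sigma\colon [n]\to[\nu]$ and identify the surviving terms. The key ingredients --- orthogonality of $e^{ikx}$, automatic exclusion of empty classes via $T\geq n>0$, and independence/Fubini for the moment identities --- coincide with the paper's.
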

\begin{proof}{ {\bf (1)\/}
Given an ordered partition $\bold S=\{S_1,\dots,S_{ \nu}\}$ of $[n]$ into non-empty subsets $S_{\a}$, define $Y_{\a}:=\sum_{j\in S_{\a}}X_j$,  so that $\sum_{\be\in [\nu]}Y_{\be}=\sum_{j\in [n]}X_j$.  $\bold S$  is perfect if and only if
\begin{equation}\label{new2.5}
 \nu \cdot Y_{\a}  -\sum_{\be\in [ \nu]}Y_{\be}=0,\quad \a\in [ \nu].
\end{equation}
 Clearly, for a perfect partition to exist, 
 $\sum_{j\in [n]}X_j$ must be divisible by $\nu$.
Let $\Bbb I(A)$ denote the indicator of an event $A$. Then
\begin{multline*}
\Bbb I(\bold S\text{ perfect})=\prod_{\a\in [ \nu]}\Bbb I\biggl( \nu\cdot Y_{\a}-\sum_{\be\in [ \nu]}Y_{\be}=0\biggr)\\
=\prod_{\a\in [ \nu]}\frac{1}{2\pi}\int_{x\in [-\pi,\pi]}\exp\biggl[ix\biggl( \nu \cdot Y_{\a}  -\sum_{\be\in [ \nu]}Y_{\be}\biggr)\biggr]\,dx\\
=\frac{1}{(2\pi)^{ \nu}}\int_{\bold x\in [-\pi,\pi]^{ \nu}}\exp\biggl[i\sum_{\a\in  [\nu]}x_{\a}\biggl( \nu\cdot Y_{\a}-\sum_{\be\in [ \nu]}Y_{\be}\biggr)
\biggr]\,d\bold x,\quad \bold x:=\{x_{\a}\}_{\a\in [\nu]}.
\end{multline*}
Summing the above equation over all ordered partitions $\bold S$ with nonempty $S_{\a}$'s, and interchanging summation and integration on RHS, we obtain
\begin{equation}\label{new3}
Z_n=\frac{1}{(2\pi)^{ \nu}}\int_{\bold x\in [-\pi,\pi]^{ \nu}}\sum_{\bold S}\exp\biggl[i\sum_{\a\in [ \nu]}x_{\a}\biggl( \nu\cdot Y_{\a}-\sum_{\be\in [ \nu]}Y_{\be}\biggr)
\biggr]\,d\bold x.
\end{equation}
What if $\bold S$ is such that some $S_{\a_0}=\emptyset$? In this case $Y_{\a_0}=\sum_{j\in S_{\a_0}}X_j=0$; so the factor by $ix_{\a_0}$ in the exponent sum is $-\sum_{j\in [n]}X_j\neq 0$. Then the integral of the corresponding exponential 
function over the (hyper)cube $[-\pi,\pi]^{\nu}$ is zero. 
Therefore we may, and will  
extend the sum in \eqref{new3} to all ordered sequences $\bold S$ of sets $S_1,S_2,\dots,S_{ \nu}$, {\it empty or non-empty\/}, that form a partition of $[n]$. Regrouping summands in the exponent we obtain 
\begin{align*}
&\exp\biggl[i\sum_{\a\in [ \nu]}x_{\a}\biggl( \nu\cdot Y_{\a}-\sum_{\be\in [ \nu]}Y_{\be}\biggr)
\biggr]=\prod_{j\in [n]}\exp(i\sigma_jX_j),\\
&\quad\,\,\sigma_j=\sum_{\a\in [ \nu]}x_{\a}\Bigl(\nu\,\delta_{\a}(j)-1\Bigr), \quad \delta_{\a}(j):= \Bbb I(j\in S_{\a}).
\end{align*}
Such a {\it relaxed\/} partition $\bold S$ is uniquely determined by values of $\delta_{\a}(j)\in\{0,1\}$, subject  to $\sum_{\a\in [\nu]}\delta_{\a}(j)=1$, ($j\in [n]$), but not to $\sum_{j\in [n]}\delta_{\a}(j)\ge 1$, ($\a\in [\nu]$), because empty $S_{\a}$ are allowed now. Thus, $\delta_{\a}(j)$
vary {\it independently\/} for different $j$.  So, introducing
\[
\Sigma(\bold x):=\biggl\{\sum_{\a\in [ \nu]}x_{\a}\bigl(\nu\,\delta_{\a}-1\bigr): \delta_{\ga}\in \{0,1\},\,
\sum_{\ga\in [\nu]}\delta_{\ga}=1\biggr\},
\]
for the sum over relaxed partitions we obtain
\begin{multline*}
\sum_{\bold S}\exp\biggl[i\sum_{\a\in [\nu]}x_{\a}\biggl( \nu\cdot Y_{\a}-\sum_{\be\in [ \nu]}Y_{\be}\biggr)
\biggr]=\prod_{j\in [n]}\sum_{\sigma\in \Sigma(\bold x)}\exp(iX_j \sigma)\\
=\prod_{j\in [n]}\sum_{\a\in [\nu]}\exp\biggl[i\biggl(\nu x_{\a}-\sum_{\be\in [\nu]}x_{\be}\biggr)X_j\biggr]
=\prod_{j\in [n]}\sum_{\a\in [\nu]}\exp\bigl(iy_{\a}(\bold x)X_j\bigr).
\end{multline*}
We conclude that
\begin{equation*}
Z_n=\frac{1}{(2\pi)^{ \nu}}\int\limits_{\bold x\in [-\pi,\pi]^{ \nu}}
\prod_{j\in [n]}\sum_{\a\in [\nu]}\exp\bigl(iy_{\a}(\bold x)X_j\bigr)\,\,d\bold x,
\end{equation*}
which is \eqref{new1} in the lemma. Equating the expectations of both sides and using independence of $X_j$'s, we get \eqref{new1.5}. 

{\bf (2)\/}  The equation \eqref{new1} implies that
\[
Z_n^2=\frac{1}{(2\pi)^{ 2\nu}}\!\!\!\int\limits_{\bold x, \bold x'}\prod_{j\in [n]}f(\bold x,X_j)
\overline{f(\bold x',X_j)}\,\,d\bold x d\bold x',
\]
so that
\[
\Bbb E[Z_n^2]=\frac{1}{(2\pi)^{ 2\nu}}\!\!\!\int\limits_{\bold x, \bold x'}\Bbb E^n\bigl[f(\bold x,X)
\overline{f(\bold x',X)}\bigr]\,\,d\bold x d\bold x'.
\]

}
\end{proof}

\section{Expected number of perfect partitions}
\noindent   
Using Lemma \ref{newlem1}, we prove

\begin{theorem}\label{newthm1} {Let $\nu\ge 3$. Suppose that $M=M(n)\to\infty$ however slowly.  If $\lim\tfrac{n}{\log M}<
\tfrac{2}{\log \nu}$, then $\lim\Bbb E[Z_n]=0$. {\bf (i)\/} Suppose $\nu=3$. If $\lim\tfrac{n}{\log M}\in \bigl(\tfrac{2}{\log 3},\infty\bigr)\setminus\{2,4\}$, then
\begin{equation}\label{?}
\Bbb E\bigl[Z_n\bigr]=(1+o(1))\frac{\nu^n}{M^{\nu-1}}\cdot\frac{\nu^{\nu-3/2}}{(2\pi\nu c_Mn)^{\frac{\nu-1}{2}}}\to\infty,\\
\end{equation}
$c_{\scriptscriptstyle M}:=M^{-2}\Bbb E[X^2]= 1/3+O(M^{-1})$. 
{\bf (ii)\/} Suppose $\nu\ge 4$. Then \eqref{?} holds if $\lim\tfrac{n}{\log M}\in \bigl(\tfrac{2}{\eta(\nu)},\infty)\setminus
\{2(\nu-1)\}$; here $\eta(\nu)\in \bigl(\tfrac{1}{\nu-1},\tfrac{2\log \nu}{\nu-1}\bigr)$ 
is the larger of two roots of $\tfrac{e^{(\nu-1)\eta}}{\nu e \eta}-1$, the smaller one being below $\tfrac{1}{\nu-1}$.}
\end{theorem}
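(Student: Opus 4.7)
The plan is to handle Part 1 by a first-moment bound, and parts (i), (ii) by applying Laplace's method to \eqref{new1.5}. For Part 1, expand $\mathbb E[Z_n]=\sum_{\mathbf k:\sum_\alpha k_\alpha=n,\,k_\alpha\ge 1}\binom{n}{\mathbf k}\mathbb P(T_1=\cdots=T_\nu)$, where $T_\alpha$ is the sum of $k_\alpha$ independent uniforms on $[M]$. The pointwise bound $\max_t\mathbb P(T_\alpha=t)\le 1/M$ (by conditioning on all but one of the summands) iterates to $\mathbb P(T_1=\cdots=T_\nu)=\sum_t\prod_\alpha\mathbb P(T_\alpha=t)\le M^{-(\nu-1)}$, so $\mathbb E[Z_n]\le\nu^n M^{-(\nu-1)}$; this vanishes whenever $n\log\nu<(\nu-1)\log M$, and since $(\nu-1)/\log\nu\ge 2/\log\nu$ for $\nu\ge 3$, the hypothesis $\lim n/\log M<2/\log\nu$ suffices.

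For the asymptotic formula in (i) and (ii), set $g(\mathbf x):=\sum_{\alpha\in[\nu]}h(y_\alpha(\mathbf x))$ with $h(y):=\mathbb E[e^{iyX}]$, so that \eqref{new1.5} reads $\mathbb E[Z_n]=(2\pi)^{-\nu}\int_{[-\pi,\pi]^\nu}g(\mathbf x)^n\,d\mathbf x$. The universal bound $|g|\le\nu$ is attained precisely on the ``saddle locus'' where each $y_\alpha\in 2\pi\mathbb Z$; since $\sum_\alpha y_\alpha\equiv 0$ is automatic, these saddles are indexed by $\mathbf m\in\mathbb Z^\nu$ with $\sum m_\alpha=0$, and each traces a one-parameter curve in $[-\pi,\pi]^\nu$ parametrised by $u:=\sum_\beta x_\beta$, of length $L(\mathbf m):=2\pi(\nu-s(\mathbf m))_+$, where $s(\mathbf m):=\max_\alpha m_\alpha-\min_\alpha m_\alpha$. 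Around each saddle the Taylor expansion $g=\nu-(M^2c_M/2)\sum_\alpha(y_\alpha-2\pi m_\alpha)^2+O(M^3|\cdot|^3)$ produces, upon raising to the $n$-th power, a Gaussian in the $(\nu-1)$-dimensional direction transverse to the saddle curve, with transverse integral $(2\pi/(\nu n M^2 c_M))^{(\nu-1)/2}$. Multiplying by the saddle-curve length $L(\mathbf m)/\sqrt\nu$ (converted from the $u$-parameter to the orthonormal coordinate relative to $\mathbf 1$) and summing over $\mathbf m$ reduces the main-term calculation to the combinatorial identity $\sum_{\mathbf m:\sum m_\alpha=0}(\nu-s(\mathbf m))_+=\nu^{\nu-1}$, which I would prove by rewriting the left-hand side as $\int_{v\in\mathbb R}\#\{\mathbf m\in\mathbb Z^\nu:\sum m_\alpha=0,\ m_\alpha\in[v-\nu/2,v+\nu/2]\}\,dv$ and identifying the piecewise-constant integrand with the coefficient of an appropriate power of $x$ in $(1+x+\cdots+x^{\nu-1})^\nu$ (direct evaluation yields the values $\nu^{\nu-1}=2,9,64,\ldots$ for $\nu=2,3,4,\ldots$). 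Combining the pieces reproduces the formula \eqref{?}.

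The hard part is bounding the contribution from the complement of the union of small neighbourhoods of the saddle curves. On this complement at least one $y_\alpha$ is bounded away from $2\pi\mathbb Z$, forcing $|h(y_\alpha)|\le\rho<1$ and $|g|/\nu\le 1-(1-\rho)/\nu$; choosing the neighbourhood scale $\delta$ optimally is delicate because the complementary contribution $\lesssim(\nu-(1-\rho(\delta)))^n\cdot(2\pi)^\nu$ must be dominated by the main term $\sim\nu^n/(n^{(\nu-1)/2}M^{\nu-1})$, yet cubic Taylor terms must remain negligible inside. The analysis splits $\mathbf y$ according to whether each $y_\alpha$ lies within Gaussian scale $\lesssim 1/(M\sqrt n)$ of $2\pi\mathbb Z$ (handled by Laplace), at an intermediate scale, or is macroscopically separated (bounded via $|h(y)|=O(1/M)$). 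For $\nu=3$, balancing the worst-case exponent against the main term yields the threshold $n/\log M>2/\log\nu$, which coincides with the condition that the main term itself is growing. For $\nu\ge 4$ the extremal configuration occurs at an intermediate $y$-scale and the resulting exponential rate is governed by the larger root $\eta(\nu)$ of $e^{(\nu-1)\eta}=\nu e\eta$; this defining equation is precisely the first-order condition for the relevant optimisation, and produces the threshold $n/\log M>2/\eta(\nu)$. The excluded boundary values $\{2,4\}$ for $\nu=3$ and $\{2(\nu-1)\}$ for $\nu\ge 4$ are precisely where the main-term and worst-case far-field exponents coincide, at which the simple comparison fails and a refined analysis would be required (conjecturally unnecessary).
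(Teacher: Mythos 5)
Your first-moment argument for Part 1 is correct and more elementary than the paper's (which obtains Part 1 as a by-product of the full remainder analysis at the end of the proof). In fact $\Bbb E[Z_n]\le\nu^nM^{-(\nu-1)}$ shows $\Bbb E[Z_n]\to0$ already when $\lim n/\log M<(\nu-1)/\log\nu$, which agrees with $2/\log\nu$ at $\nu=3$ and is strictly larger for $\nu\ge4$. Your window-integral derivation of $\sum_{\bold m:\,\sum m_\alpha=0}(\nu-s(\bold m))_+=\nu^{\nu-1}$ is likewise a valid and slightly cleaner alternative to the paper's telescoping plus roots-of-unity computation \eqref{coeff1}--\eqref{M=} and Lemma \ref{lem1.6}.

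The genuine gap is in what you yourself call ``the hard part.'' For Parts (i) and (ii) the asymptotic \eqref{?} requires the integral over the complement of the cylindrical neighbourhoods to be $o$ of the main term $\nu^n/(M^{\nu-1}n^{(\nu-1)/2})$; that is the content of Lemma \ref{lem2.55}, and your sketch does not carry it out. It names the right multi-scale decomposition but omits the device on which the bound actually hinges: after applying the Cauchy--Schwarz estimate of Lemma \ref{lem2.5} and raising to the $N$-th power, one expands multinomially over $\bold r$ and changes variables from $x_\alpha$ to $\xi_\alpha$ for all coordinates except one, $x_\mu$, with $\mu$ chosen \emph{adaptively in $\bold r$}: whenever some $r_\beta=0$ with $\beta\in A^c$, one takes $\mu=\beta$, replacing an integral of order $M$ by one of order $1$. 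That is exactly what yields $M^{\nu-a-1}$ rather than $M^{\nu-a}$ in \eqref{3.8new} and hence the three-case bound \eqref{new3.717}; without it the far-field estimate is larger by a factor of $M$ and the $\nu\ge4$ threshold does not come out as $2/\eta(\nu)$. Your characterization of $e^{(\nu-1)\eta}=\nu e\eta$ as ``the first-order condition for the relevant optimisation'' is also imprecise: the interior optimisation is over the integer $a\in[1,\nu-1]$ (the number of near-lattice coordinates) and produces the candidate bound $\bigl(\nu N/(e\log M)\bigr)^N$; $\eta(\nu)$ is the value of $\log M/N$ at which this candidate matches the main term $\nu^{2N}/M^{\nu-1}$ --- a balance condition, not a stationarity condition. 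Closing the gap amounts to reproducing \eqref{3.502} through \eqref{new3.72}.
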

\noindent {\bf Note.\/} 
It is unclear whether $\Bbb E[Z_n]\to\infty$ if 
$\nu\ge 3$ and $\lim\tfrac{n}{\log M}=2(\nu-1)$, or if $\nu=3$ and  $\lim\tfrac{n}{\log M}=2$. Perhaps $\Bbb E[Z_n]\to\infty$ in all these cases, albeit at a rate lower then for $\lim\tfrac{n}{\log M}$ arbitrarily close to, but different from 
$2(\nu-1)$ for $\nu\ge 3$, and from $2$ for $\nu=3$, respectively. The case
 $\nu\ge 4$, $\lim\tfrac{n}{\log M}\in \bigl(\tfrac{2}{\log\nu}, \tfrac{2}{\eta(\nu)}\bigr)$ remains open as well.

\begin{proof}  By \eqref{new1.5}, we have
\begin{align*}
\Bbb E\bigl[Z_n\bigr]
&=\!\frac{1}{(2\pi)^{\nu}}\!\!\!\int\limits_{\bold x\in [-\pi,\pi]^{\nu}}\!\!\!\Bbb E^n\bigl[
f(\bold x,X) \bigr] d\bold x,\\
\Bbb E\bigl[f(\bold x,X)\bigr]&=\sum_{\a\in [\nu]}\Bbb E\bigl[e^{iy_{\a}X}\bigr],\quad y_{\a}=y_{\a}(\bold x):=\nu x_{\a}-\sum_{\be\in [\nu]}x_{\be}.
\end{align*}
{\bf (I)\/} Since $X$ is integer-valued, $\Big|\Bbb E\bigl[f(\bold x,X) \bigr] \Big|$ 
certainly attains its  maximum $\nu$ at every $\bold x$ such that for each $\a\in [\nu]$ we have
$y_{\a}=y_{\a}(\bold x)=k_{\a}\pi$, and $k_{\a}$ is even. Here necessarily $\sum_{\a\in [\nu]}k_{\a}=0$, since $\sum_{\a\in [\nu]}y_{a}(\bold x)=0$. Given such $\bold k=\{k_{\a}\}_{\a\in [\nu]}$, the set of $\bold x$ satisfying
\begin{equation}\label{3.01}
\nu x_{\a}-\sum_{\be\in [\nu]} x_{\be}= k_{\a}\pi,\quad (\a\in [\nu]),
\end{equation}
is a straight line $\mathcal L_{\bold k}$ in $\Bbb R^{\nu}$, since the $\nu\times\nu$ matrix on the LHS has rank $\nu-1$. We will prove that $\Big|\Bbb E\bigl[f(\bold x,X) \bigr] \Big|<\nu$ everywhere else.
Let us enumerate the lines $\mathcal L_{\bold k}$ that contain {\it interior points\/} of  the cube $[-\pi,\pi]^{\nu}$.
To this end notice that each line's parametric equation is $x_{\a}=\nu^{-1}(t+k_{\a}\pi)$, $t\in \Bbb R$. This line contains an interior point $\bold x$ of the cube $[-\pi,\pi]^{\nu}$ if and only if for some $t$
\[
\nu^{-1}(t+k_{\a}\pi)\in (-\pi,\pi), \quad\forall\,\a\in [\nu],
\]
or equivalently
\begin{equation}\label{3.015}
-\pi(\nu+\min k_{\a})<t<\pi(\nu-\max k_{\a}).
\end{equation}
In particular, $\max k_{\a}-\min k_{\a}\le 2(\nu-1)$, since $k_{\a}$ are even.
For $\nu=2$ we have $k_1=k_2=0$ since $k_1+k_2=0$. From now on we focus on $\nu>2$. We consider these (admissible) lines only, since any other line can share at most one point with the cube.
And the set of all such touch points is finite.

Indeed, suppose $\mathcal L$ does not contain interior points of the cube. If $\mathcal L$'s segment is in the cube, then this segment, i.e. a non-zero multiple of $\bold e:=(1,\dots,1)\in E^{\nu}$, belongs to one of $2\nu$ faces, cubes of dimension $\nu-1$. Hence the segment is orthogonal to a coordinate vector $(0,\dots,0,1,0,\dots 0)\in E^{\nu}$-- contradiction. So $\mathcal L$ shares at most one point with the cube. Any such touch point $\bold x$ satisfies $y_{\a}(\bold x)=\pi k_{\a}$, $(\a\in [\nu])$, and it has to have a component $x_{\a'}=\pi$, and a component $x_{\a''}=-\pi$, for that matter. Otherwise $\mathcal L$ would have contained interior points of the cube. The linear system $y_a(\bold x)=k_{\a}\pi$, $x_{\a'}=\pi$ has at most one solution in the cube, because the associated matrix has rank $\nu$. So, since $|k_{\a}|\le 2(\nu-1)$, $\a\in [\nu]$, the number of such solutions (i.e. touch points) is at most $\nu(2\nu-1)^{\nu}$.

By \eqref{3.015}, $\bold k=\{k_{\a}\}_{\a\in [\nu]}$, which consists of even numbers adding up to $0$, is admissible if and only if 
$
d(\bold k):=2\nu+\min_{\a}k_{\a}-\max_{\a}k_a\ge 2, 
$
since the range of the parameter $t$ is $\pi d(\bold k)$, and 
the length of the corresponding line segment is 
\begin{equation}\label{3.016}
L(\bold k)=\pi d(\bold k)\nu^{-1/2}=\pi \nu^{-1/2} \bigl(2\nu+\min_{\a}k_{\a}-\max_{\a}k_a\bigr).
\end{equation}
Let even numbers $a\le 0\le b$ be generic values of $\min_{\a}k_{\a}$ and $\max_{\a}k_{\a}$. 
Let $\mathcal N(a,b)$ be the total number of admissible $\{k_{\a}\}_{\a\in [\nu]}$ with parameters $a$, $b$. Obviously $\mathcal N(a,a)=\Bbb I(a=0)$, and $\mathcal N(a,a+2)=0$. Consider $r:=b-a\in [4,2(\nu-1)]$, $2(\nu-1)$  coming from $d(\bold k)\ge 2$.  A generic $\{k_{\a}\}$ with given $a=
\min_{\a}k_{\a}$,  $b=\max_{\a}k_{\a}$, contains some $\mu_1>0$ components equal $a$, and $\mu_2>0$ components 
equal $b$, with $\mu_1+\mu_2\le \nu$, and $\mu_3:=\nu-\mu_1-\mu_2$ remaining even components, with values strictly between $a$ and $b$, such that the total sum of all $\nu$ components is zero. Consequently, if $b\ge a+4$ then
\begin{equation}\label{3.02}
\begin{aligned}
N(a,b)&=[\zeta^0]\sum_{\mu_1,\,\mu_2>0}\frac{\nu!}{\mu_1!\,\mu_2!\,\mu_3!}\cdot \zeta^{\mu_1 a+\mu_2 b}\biggl(\sum_{\text{even }j=a+2}^{b-2}\zeta^j\biggr)^{\mu_3},\\
\end{aligned}
\end{equation}
with the last sum equal 
\begin{equation}\label{3.025}
\zeta^{a+2}\sum_{\text{even }j=0}^{r-4}\zeta^j=\tfrac{\zeta^{a+2}(1-\zeta^{r-2})}{1-\zeta^2},\quad r:=b-a.
\end{equation}
So, by inclusion-exclusion, the RHS function of $\zeta$ in \eqref{3.02} equals
\begin{multline*}
\biggl[\zeta^a+\zeta^b+\frac{\zeta^{a+2}(1-\zeta^{r-2})}{1-\zeta^2}\biggr]^{\nu}-\biggl[\zeta^a+\frac{\zeta^{a+2}(1-\zeta^{r-2})}{1-\zeta^2}\biggr]^{\nu}\\
-\biggl[\zeta^b+\frac{\zeta^{a+2}(1-\zeta^{r-2})}{1-\zeta^2}\biggr]^{\nu}+\biggl[\frac{\zeta^{a+2}(1-\zeta^{r-2})}{1-\zeta^2}\biggr]^{\nu}\\
=\zeta^{a\nu}\biggl[\biggr(\frac{1-\zeta^{r+2}}{1-\zeta^2}\biggr)^{\nu}-\biggr(\frac{1-\zeta^r}{1-\zeta^2}\biggr)^{\nu}
-\biggl(\frac{\zeta^2-\zeta^{r+2}}{1-\zeta^2}\biggr)^{\nu}+\biggl(\frac{\zeta^2-\zeta^r}{1-\zeta^2}\biggr)^{\nu}\biggr].
\end{multline*}
So, 
we have
\begin{equation}\label{3.03}
\begin{aligned}
\mathcal N(a,b)&=[\zeta^{-a\nu}] \mathcal F_r(\zeta),\\
\mathcal F_r(\zeta)&=
\biggr(\frac{1-\zeta^{r+2}}{1-\zeta^2}\biggr)^{\nu}-\biggr(\frac{1-\zeta^r}{1-\zeta^2}\biggr)^{\nu}\\
&\qquad\qquad\qquad-\biggl(\frac{\zeta^2-\zeta^{r+2}}{1-\zeta^2}\biggr)^{\nu}+\biggl(\frac{\zeta^2-\zeta^r}{1-\zeta^2}\biggr)^{\nu}.
\end{aligned}
\end{equation}
Since $b=a+r>0$, we must have $\mathcal N(a,b)=0$ for $a\le -r$. As a partial validation, one can check that the top RHS 
of \eqref{3.03} is indeed zero for $a\le -r$, and also for $a=0$.
What we will actually need is
\begin{equation}\label{3.04}
\mathcal N(r):=\sum_{\text{even }a<0}\mathcal N(a,a+r),\quad \text{even }r\in [4,2(\nu-1)],
\end{equation}
the total number of the relevant $\nu$-tuples of even integers $\{k_{\a}\}_{\a\in [\nu]}$ adding up to $0$, such that $\max_{\a\in [\nu]}k_{\a}-\min_{\a\in [\nu]}k_{\a}=r$. 
In fact, ultimately we are after a single number, namely
\[
\mathcal M=\mathcal M(\nu):=2\nu+\sum_{\text{even }r\in [4,2(\nu-1)]}(2\nu-r)\mathcal N(r),
\]
since $\pi\nu^{-1/2}\mathcal M$ is the total length of the in-cube segments of lines for even tuples 
$\bold k$. Combining \eqref{3.03} and \eqref{3.04} and changing the order of double summation we obtain
\begin{equation}\label{3.05}
\mathcal M
=2\nu+\sum_{\text{even }a\le 0} [\zeta^{-a\nu}]\sum_{\text{even }r\in [4,2(\nu-1)]}(2\nu-r)\mathcal F_r(\zeta).
\end{equation}
We included $a=0$ since $\mathcal F_r(0)=0$. Using the formula for $\mathcal F_r(\zeta)$, and telescoping, we get: with $r$ even,
\begin{multline}\label{3.055}
\sum_{r\in [4,2(\nu-1)]}\!\!\!\!\!\!\!\!\!\mathcal F_r(\zeta)
\!=\biggr(\!\frac{1-\zeta^{2\nu}}{1-\zeta^2}\!\biggr)^{\nu}\!-\biggr(\!\frac{1-\zeta^4}{1-\zeta^2}\!\biggr)^{\nu}
\!-\biggl(\!\frac{\zeta^2-\zeta^{2\nu}}{1-\zeta^2}\!\biggr)^{\nu}\!\!+\biggl(\!\frac{\zeta^2-\zeta^4}{1-\zeta^2}\!\biggr)^{\nu}\\
=\biggr(\!\frac{1-\zeta^{2\nu}}{1-\zeta^2}\!\biggr)^{\nu}-\biggl(\!\frac{\zeta^2-\zeta^{2\nu}}{1-\zeta^2}\!\biggr)^{\nu}-(1+\zeta^2)^{\nu}
+\zeta^{2\nu}:=S_1(\zeta^2).
\end{multline}
Then
\[
\sum_{\text{even }a\le 0}[\zeta^{-a\nu}]\sum_{\text{even }r\in [4,2(\nu-1)]}\mathcal F_r(\zeta)
=\sum_{\a\ge 0}[z^{\a\nu}]S_1(z),
\]
and using a simple formula
\begin{equation}\label{3.057}
\sum_{\a\ge 0}\, [z^{\a\nu}]\,S(z)=\tfrac{1}{\nu}\sum_{j=0}^{\nu-1}S(z_j), \quad z_j:=\exp\bigl(i\tfrac{2\pi j}{\nu}\bigr),
\end{equation}
we obtain
\begin{multline}\label{coeff1}
\sum_{\a\ge 0}[z^{\a\nu}]S_1(z)=\tfrac{1}{\nu}\sum_{j=0}^{\nu-1}\Bigl[\Bigl(\tfrac{1-z_j^{\nu}}{1-z_j}\Bigr)^{\nu}-\Bigl(\tfrac{1-z_j^{\nu-1}}{1-z_j}\Bigr)^{\nu}\Bigr] - 1\\
=\tfrac{1}{\nu}\bigl[\nu^{\nu}-(\nu-1)^{\nu}-(\nu-1)(-1)^{\nu}\bigr]-1.
\end{multline}
Combinatorially, the RHS is the total number of all tuples associated with lines {\it containing interior points\/} of the cube $[-\pi,\pi]^{\nu}$, distinct from the diagonal $\bold x =\ga \bold e$, i.e. $k_{\a}\equiv 0$. In particular, it is $0$ for $\nu=2$, and 
it is $6=3!$, as it should be: the former because the single admissible $2$-tuple is $\{0,0\}$, and the latter because 
the admissible tuples $\{k_1,k_2,k_3\}\neq\{0,0,0\}$ are $3!$ permutations of the single tuple $\{-2,0,2\}$.   

Similarly telescoping again,
\begin{multline}\label{3.056}
\sum_{\text{even }r\in [4,2(\nu-1)]}\!\!\!\!\!\!r\,\mathcal F_r(\zeta)=2\nu\biggr(\!\frac{1-\zeta^{2\nu}}{1-\zeta^2}\!\biggr)^{\nu}\!
-2\nu\zeta^{2\nu}\biggl(\!\frac{1-\zeta^{2(\nu-1)}}{1-\zeta^2}\!\biggr)^{\nu}\\
\quad+2\!\!\!\!\!\sum_{\text{even }r\in [6,2\nu]}\biggl[-\biggl(\frac{1-\zeta^r}{1-\zeta^2}\biggr)^{\nu}+
\zeta^{2\nu}\biggl(\frac{1-\zeta^{r-2}}{1-\zeta^2}\biggr)^{\nu}\biggr]\\
\quad-4(1+\zeta^2)^{\nu}+4\zeta^{2\nu}.
\end{multline}
Now, analogously to \eqref{coeff1}, we will replace $\zeta^2$ with $z$, making $\zeta^{2\nu}=z^{\nu}=1$ for all $z_j=\exp\bigl(i\tfrac{2\pi j}{\nu}\bigr)$. Hence we may, and will replace in \eqref{3.056} the factor $\zeta^{2\nu}$ with $1$ beforehand, and make the middle sum perfectly amenable to another round of telescoping. Diagrammatically,
\begin{align*}
\sum_{\text{even }r\in [4,2(\nu-1)]}\!\!\!\!\!\!r\,\mathcal F_r(\zeta)\Longrightarrow
&S_2(z):=2(\nu-1)\Bigl(\tfrac{1-z^{\nu}}{1-z}\Bigr)^{\nu}\\
&-2\nu\Bigl(\tfrac{1-z^{\nu-1}}{1-z}\Bigr)^{\nu}-2(1+z)^{\nu}+4z^{\nu}.
\end{align*}
And
\begin{multline}\label{coeff2}
\sum_{\text{even }a\le 0}[\zeta^{-a\nu}]\sum_{\text{even }r\in [4,2(\nu-1)]}r\mathcal F_r(\zeta)\\
=\sum_{\a\ge 0}[z^{\a\nu}]S_2(z)=\tfrac{2}{\nu}\sum_{j=0}^{\nu-1}\Bigl[(\nu-1)\Bigl(\tfrac{1-z_j^{\nu}}{1-z_j}\Bigr)^{\nu}
-\nu\Bigl(\tfrac{1-z_j^{\nu-1}}{1-z_j}\Bigr)^{\nu}\Bigr]\\
=2\bigl(1-\tfrac{1}{\nu}\bigr)-2(\nu-1)^{\nu}-2(\nu-1)(-1)^{\nu}.
\end{multline}
Combining \eqref{3.016}, \eqref{coeff1}, and \eqref{coeff2}, we obtain 
\begin{multline}\label{M=}
\mathcal M=2\nu\Bigl(1+\sum_{\text{even }a\le 0}[\zeta^{-a\nu}]\sum_{\text{even }r\in [4,2(\nu-1)]}\mathcal F_r(\zeta)\Bigr)\\
-\sum_{\text{even }a\le 0}[\zeta^{-a\nu}]\sum_{\text{even }r\in [4,2(\nu-1)]}r\mathcal F_r(\zeta)=2\nu^{\nu-1}.
\end{multline}

\noindent Combining \eqref{3.016} and  \eqref{M=}, 
we have proved 
\begin{lemma}\label{lem1.6} The total length of the in-cube segments of lines for even tuples $\bold k$ is $2\pi \nu^{\nu-3/2}$.
\end{lemma}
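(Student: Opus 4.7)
The plan is to combine two already-assembled ingredients. By the parameterization leading to \eqref{3.016}, each admissible line $\mathcal{L}_{\bold k}$ contributes an in-cube segment of length $L(\bold k) = \pi\nu^{-1/2}d(\bold k)$ with $d(\bold k) = 2\nu + \min_\alpha k_\alpha - \max_\alpha k_\alpha$. Hence the total length equals $\pi\nu^{-1/2}\mathcal{M}$, where
\[
\mathcal{M} = 2\nu + \sum_{\text{even }r\in [4,2(\nu-1)]}(2\nu-r)\mathcal{N}(r),
\]
the leading $2\nu$ coming from the diagonal tuple $\bold k = \bold 0$ and $\mathcal{N}(r)$ counting the admissible even tuples summing to $0$ of spread $r = \max_\alpha k_\alpha - \min_\alpha k_\alpha$.

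To evaluate $\mathcal{N}(r)$, I would first refine to $\mathcal{N}(a, a+r)$ by fixing $\min_\alpha k_\alpha = a$ and $\max_\alpha k_\alpha = a+r$. A standard inclusion-exclusion on the $\min/\max$ constraints, together with the multinomial expansion that marks each component by $\zeta^{k_\alpha}$ and enforces $\sum_\alpha k_\alpha = 0$ through the coefficient extraction $[\zeta^0]$, produces the four-term alternating sum $\mathcal{F}_r(\zeta)$ of \eqref{3.03}. Summing over even $a \le 0$ (the case $a = 0$ contributing nothing) and interchanging with the sum over $r$, the task reduces to extracting $\sum_{\alpha\ge 0}[z^{\alpha\nu}]$ (with $z := \zeta^2$) from the two generating functions $\sum_r \mathcal{F}_r$ and $\sum_r r\,\mathcal{F}_r$.

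The decisive observation is that both of these sums telescope in $r$. For the weighted one, the factors $\zeta^{2\nu}$ may be replaced by $1$ \emph{before} telescoping, which is legitimate because the subsequent roots-of-unity filter evaluates $z$ at $z_j = e^{2\pi i j/\nu}$ where $z_j^\nu = 1$. This yields the closed forms $S_1(z)$ and $S_2(z)$ displayed just before \eqref{coeff1} and \eqref{coeff2}. Applying the filter \eqref{3.057}, each term simplifies via $(1-z_j^\nu)/(1-z_j) = \nu\cdot\Bbb I(j=0)$ and $(1-z_j^{\nu-1})/(1-z_j)$ equal to $\nu-1$ at $j=0$ and to $-z_j^{-1}$ at $j\neq 0$; the sums then collapse to the compact identities \eqref{coeff1} and \eqref{coeff2}. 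Plugging these into the expression for $\mathcal{M}$ produces $\mathcal{M} = 2\nu^{\nu-1}$, whence the total length is $\pi\nu^{-1/2}\cdot 2\nu^{\nu-1} = 2\pi\nu^{\nu-3/2}$.

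The main obstacle is not any single computation but the bookkeeping across the chain: the correct inclusion-exclusion for the $\min/\max$ pair, two rounds of telescoping with the delicate $\zeta^{2\nu}\mapsto 1$ substitution in the middle, and the final algebraic cancellation in which the unweighted and weighted contributions balance exactly to $2\nu^{\nu-1}$. Off-by-one errors at the boundaries $r = 4$ and $r = 2(\nu-1)$, and sign errors in the inclusion-exclusion, are the likely pitfalls.
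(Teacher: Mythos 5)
Your proposal reproduces the paper's argument essentially step for step: the same parameterization $L(\bold k)=\pi\nu^{-1/2}d(\bold k)$, the same reduction to $\mathcal M=2\nu+\sum_r(2\nu-r)\mathcal N(r)$, the same inclusion-exclusion/generating-function identity \eqref{3.03}, the same two rounds of telescoping (including the $\zeta^{2\nu}\mapsto 1$ substitution justified by $z_j^\nu=1$), and the same roots-of-unity filter \eqref{3.057} leading to \eqref{coeff1}, \eqref{coeff2}, and $\mathcal M=2\nu^{\nu-1}$. The plan is correct and, aside from leaving the telescoping and final algebra as routine verifications, mirrors the paper's proof.
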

\noindent The reason that asymptotically only $\pi\nu^{-1/2} \mathcal M$ matters is that the dominant contribution to $\Bbb E[Z_n]$ comes from thin, parallel, cylinders in the cube $[-\pi,\pi]^{\nu}$, each enclosing its own line segment, with the  integrand asymptotically  translation-invariant along the line segment, and dependent only on the Euclidean distance from $\bold x$ to the point on this segment with the same $\sum_{\a\in [\nu]}x_{\a}$.

{\bf (II)\/} {To prove this claim, our first step is to upper bound $\Big|\Bbb E\bigl[f(\bold x,X)\bigr]\Big|$ for $\bold x$'s outside the lines
\[
\mathcal L _{\bold k}=\{\bold x\in [-\pi,\pi]^{\nu}: y_{\a}(\bold x)=k_{\a}\pi\},\quad \sum_{\a}k_{\a}=0,\,\,k_{\a}\text{ are all even}.
\]
(We remind that $y_{\a}(\bold x)=\nu x_{\a}-\sum_{\be}x_{\be}$.) Start with
\begin{align*}
&\Bbb E[f(\bold x,X)]:=\sum_{\a\in [\nu]}\phi(y_{\a}),\quad y_{\a}=y_{\a}(\bold x),\\
\phi(y)&:=\Bbb E[e^{iyX}]=\tfrac{1}{M}\sum_{j\in [M]} e^{iyj}=\frac{e^{iy}(e^{iyM}-1)}{M(e^{iy}-1)},
\end{align*}
so that
\begin{equation}\label{new3.2}
\Big|\Bbb E\bigl[f(\bold x,X)\bigr]\Big|\le\sum_{\a\in [\nu]}|\phi(y_{\a})|,\quad |\phi(y)|=\Big|\frac{\sin (My/2)}{M\sin(y/2)}\Big|.
\end{equation}
To proceed, 
given $|y|\le 2\pi(\nu-1)$, define an even integer $k(y)$ by condition: 
\[
\min_{\text{even }k: |k|\le 2(\nu-1)} |y-k\pi|=|y-k(y)\pi|. 
\]
So, $k(y)\pi$ is an even multiple of $\pi$ closest to $y$ among all even multiples $k\pi$'s with $|k|\le 2(\nu-1)$. If $y$ is not an odd multiple of $\pi$, then $k(y)$ is uniquely defined. So, $z(y):=y-k(y)\pi$ is uniquely defined for almost all $y$, and for those $y$  we have  $\phi(y)=\phi(z(y))$, since $\phi$ is $2\pi$-periodic. Furthermore, $|z(y)|$ is the distance from those $y$ to the set of even multiples of $\pi$, and as such it has a continuous extension to all $y$. Therefore, $|z(y)|< \pi$ for almost all $y$. Slightly abusing notation, from now on
let $[-\pi,\pi]^{\nu}$ stand for the original cube {\it minus\/} the set of $\bold x$ such that at least one $y_{\a}(\bold x)$ is an odd multiple of $\pi$. The discarded subset has zero Lebesgue measure. Thus, for $\bold x$ in the (reduced) cube  $[-\pi,\pi]^{\nu}$, we have: for each $\a\in [\nu]$, $z(y_{\a}(\bold x))=y_{\a}(\bold x)-k(y_{\a}(\bold x))\pi$ is well defined, and
denoting $z_{\a}=z(y_{\a}(x))$, $k_{\a}=k(y_{\a}(x))$, 
\begin{equation}\label{new3.3}
\Big|\Bbb E\bigl[f(\bold x,X)\bigr]\Big|\le\sum_{\a\in [\nu]}|\phi(z_{\a})|,\quad \sum_{\a\in [\nu]}z_{\a}=-\pi\sum_{\a\in [\nu]}k_{\a}.
\end{equation}

So, we need  to upper bound $|\phi(z_{\a})|$, $(\a\in [\nu])$,  for each of these subsets. Let $z\in [-\pi,\pi]$. Pick $b>1$. If $|\sin(z/2)|\ge 
b/M$, then $|\phi(z)|\le b^{-1}$. Also, $|\phi(z)|\le \pi/(M|z|)$,
because $\sin\eta\ge 2\eta/\pi$ for $\eta\in (0,\pi/2)$. 
Now $|\sin(z/2)|\le b/M$ if and only if $|z|\le b_0(M)/M$, where $b_0(M)$ is uniquely defined for $M\ge b$ by the condition $\sin(b_0/(2M))=b/M$ with $b_0/(2M)\in (0,\pi/2]$. A closer look shows that, given $b$,
\begin{equation}\label{3.35}
b_0(M)=2b \bigl[1+O(b^2/M^2)\bigr],\quad M\to\infty.
\end{equation}
Therefore $|\phi(z)|\le b^{-1}$ if $|z|\ge b_0(M)/M$, implying that
\begin{equation}\label{new3.4}
|\phi(z)|\le \min\bigl\{\pi/(M|z|); b^{-1}\bigr\},\quad\text{for }|z|\ge b_0(M)/M. 
\end{equation}
Consider $|z|\le b_0(M)/M$, and set $z=\xi/M$, i.e., $|\xi|\le b_0(M)\sim 2b$ for large $M$. Then
\[
|\phi(z)|=\Big|\frac{\sin(\xi/2)}{M\sin(\xi/(2M))}\Big|
=|\psi(\xi)|\cdot \bigl|1+O(\xi^2/M^2)\bigr|,\quad \psi(\xi):=\frac{\sin(\xi/2)}{\xi/2}.
\]
Since 
\[
\psi^2(\xi)=\frac{1-\cos(\xi)}{\xi^2/2}=\sum_{j\ge 1}\tfrac{2(-1)^{j-1}}{(2j)!}\xi^{2(j-1)},
\]
which is an alternating series with decreasing terms for $|\xi|\le 3\sqrt{2}$, we obtain that
$
\psi^2(\xi)\le 1-\tfrac{\xi^2}{30},\quad (|\xi| \le 3\sqrt{2}).
$
Therefore, $|\phi(z)|^2\le 1-\tfrac{\xi^2}{31}$ for $|\xi|\le 3\sqrt{2}$ and $M$ sufficiently large.
Furthermore, $\psi^2(\xi)\le \tfrac{2}{9}$ for $|\xi|\ge 3\sqrt{2}$. So, given $b$, there exists $M_1(b)$ such that $|\phi(z)|^2\le \tfrac{1}{3}$
for $|\xi|\in [3\sqrt{2}, b_0(M)]$ and $M\ge M_1(b)$. Invoking \eqref{3.35}, we have: there exists $M(b)\ge M_1(b)$ such that for $M\ge M(b)$ 
\begin{equation}\label{new3.5}
|\phi(z)|^2\le 1-\la(b)\xi^2,\quad \xi:=Mz,\,|\xi|\le b_0(M),\quad \la:=\min\{\tfrac{1}{31},\tfrac{1}{7b^2}\}.
\end{equation}
It follows from \eqref{new3.4} and \eqref{new3.5} that $|\phi(z)|<1$ for $0<|z|\le \pi$. Consequently, by \eqref{new3.2}, \eqref{new3.3}, we have $\Big|\Bbb E\bigl[f(\bold x,X)\bigr]\Big|<\nu$, unless $z_{\a}(\bold x)\equiv 0$, $(\a\in [\nu])$. Moreover, using Cauchy-Schwartz inequality, and $1-\ga\le e^{-\ga}$, we obtain
\begin{lemma}\label{lem2.5} Let $\bold x\in [-\pi,\pi]^{\nu}$. With $z_{\a}=z_{\a}(\bold x)$, denote $A(\bold x)=\{\a\in [\nu]:\, |z_{\a}|\le b_0(M)/M\}$,  $a(\bold x):=|A(\bold x)|$, and define $\xi_{\a}=\xi_{\a}(\bold x):=Mz_{\a}$, $\a\in [\nu]$. Given $b>0$, for $M\ge M(b)$ there exists $\la=\la(b)>0$ such that
\begin{multline}\label{star2}
 \Big|\Bbb E\bigl[f(\bold x,X)\bigr]\Big|^2\le \nu\biggl(\sum_{\a\in A(\bold x)}|\phi(z_{\a})|^2+\sum_{\a\in A^c(\bold x)}\!\!\!{\min}^2\bigl\{\tfrac{\pi}{|\xi_{\a}|}; b^{-1}\bigr\}\biggr)\\
\le\nu\biggl[ a(\bold x)\exp\Bigl(-\tfrac{\la}{a(\bold x)}\sum_{\a\in A(\bold x)}\xi_{\a}^2\Bigr)+
 \sum_{\a\in A^c(\bold x)}\!\!\!{\min}^2\bigl\{\tfrac{\pi}{|\xi_{\a}|}; b^{-1}\bigr\}\biggr].
\end{multline}
if $A(\bold x)\neq\emptyset$; otherwise the first term within the brackets  is set zero.
\end{lemma}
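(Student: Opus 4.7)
The plan is to combine the triangle inequality, Cauchy--Schwarz, the already-established pointwise bounds on $|\phi|$ from \eqref{new3.4}--\eqref{new3.5}, and the elementary inequality $1-\gamma\le e^{-\gamma}$. The first inequality in \eqref{star2} will come out of Cauchy--Schwarz after splitting the index set between $A(\bold x)$ and its complement; the second inequality will follow by running the $A(\bold x)$--sum through an AM--GM/Jensen step.

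First I would use that $\phi$ is $2\pi$--periodic, so that $|\phi(y_\a(\bold x))|=|\phi(z_\a(\bold x))|$ for every $\a\in [\nu]$; the triangle inequality then gives $|\Bbb E[f(\bold x,X)]|\le\sum_\a|\phi(z_\a)|$. Cauchy--Schwarz applied to this sum of $\nu$ terms produces
\[
\Bigl|\Bbb E\bigl[f(\bold x,X)\bigr]\Bigr|^2\le \nu\sum_{\a\in [\nu]}|\phi(z_\a)|^2,
\]
and splitting the outer sum between $A(\bold x)$ and $A^c(\bold x)$ already gives the first inequality of \eqref{star2} once the $A^c$--summands are bounded via \eqref{new3.4}: for $\a\in A^c(\bold x)$ one has $|\xi_\a|\ge b_0(M)$, hence $|\phi(z_\a)|\le\min\{\pi/|\xi_\a|,b^{-1}\}$, so its square is exactly the $A^c$--term in the statement.

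For the indices in $A(\bold x)$ I would invoke \eqref{new3.5}, which is available once $M\ge M(b)$, to write $|\phi(z_\a)|^2\le 1-\la\xi_\a^2$ with $\la=\la(b)$ the constant furnished there. Summing over the $a(\bold x)$ indices,
\[
\sum_{\a\in A(\bold x)}|\phi(z_\a)|^2\le a(\bold x)-\la\sum_{\a\in A(\bold x)}\xi_\a^2 = a(\bold x)\Bigl(1-\tfrac{\la}{a(\bold x)}\sum_{\a\in A(\bold x)}\xi_\a^2\Bigr),
\]
and a single application of $1-\gamma\le e^{-\gamma}$, with $\gamma=\tfrac{\la}{a(\bold x)}\sum_{\a\in A(\bold x)}\xi_\a^2$, converts the parenthesized factor into the exponential in \eqref{star2}. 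Combining with the $A^c$--bound from the previous paragraph yields the second inequality.

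Since each step is a one-line application of a standard inequality, there is no real obstacle here; the only thing to track is that the constant $\la=\la(b)$ and threshold $M\ge M(b)$ are consistently those of \eqref{new3.5}, and the convention $A(\bold x)=\emptyset$ is needed only so that the formal expression $a(\bold x)^{-1}\sum_{\a\in A(\bold x)}\xi_\a^2$ need not be interpreted when $a(\bold x)=0$, in which case the exponential term is simply absent and the $A^c$--sum carries the whole bound.
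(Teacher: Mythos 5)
Your proof is correct and follows essentially the same route the paper takes (triangle inequality and periodicity to reduce to $|\phi(z_\alpha)|$, Cauchy--Schwarz to pass to squares with the factor $\nu$, the bound \eqref{new3.4} on $A^c$, the bound \eqref{new3.5} on $A$, and then $1-\gamma\le e^{-\gamma}$ after factoring out $a(\bold x)$). The paper gives this only in outline (``using Cauchy--Schwartz inequality, and $1-\gamma\le e^{-\gamma}$''), and your write-up fills in exactly the intended steps with the correct bookkeeping of $\la(b)$ and $M(b)$.
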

{\bf Note.\/} Of course, a similar inequality holds for  $\Big|\Bbb E\bigl[f(\bold x,X)\bigr]\Big|$, but  with 
{\it unsquared} ${\min}\bigl\{\tfrac{\pi}{|\xi_{\a}|}; b^{-1}\bigr\}$ and without factor $\nu$ on the RHS. The inequality \eqref{star2} has a useful advantage: ${\min}^2\bigl\{\tfrac{\pi}{|\xi|}; b^{-1}\bigr\}$ is integrable on $(-\infty,\infty)$. 
There is a price to pay though: to get a
bound for $\big|\Bbb E\bigl[f(\bold x,X)\bigr]\big|^n$, and then to integrate it over a peripheral part of the (reduced) cube $[-\pi,\pi]^{\nu}$ in question, we will have to raise both sides of the above inequality to the $(n/2)$-th power. So, while working on upper bounds, we assume that $n$ is even, and define $N=n/2$. For odd $n$ we would raise the RHS to the integer power $N\!:=\!(n-1)/2$ and multiply the result by $\nu\ge\Big|\Bbb E\bigl[f(\bold x,X)\bigr]\Big|$. }

{ The next step is to use Lemma \ref{lem2.5} in order to upper bound contribution to $\Bbb E[Z_n]$ coming from a vast part of the cube $[-\pi,\pi]^{\nu}$.
\begin{lemma}\label{lem2.55}  
Define $D=D_1\cup D_2$,
\begin{align*}
D_1&=\Bigl\{\bold x\in [-\pi,\pi]^{\nu}:\,a(\bold x)<\nu\Bigr\},\\
D_2&=\biggl\{\bold x\in [-\pi,\pi]^{\nu}:\,a(\bold x)=\nu,\,\,\sum_{\a\in [\nu]}\xi^2_{\a}\ge \tfrac{\log ^2 n}{n} \biggr\}.
\end{align*}
where $\xi_{\a}=Mz_{\a}$, $z_{\a}=y_{\a}(\bold x)-k_{\a}\pi$, $\min_{\text{even }k}|y_{\a}(\bold x)-k\pi|=
|y_{\a}(\bold x)-k_{\a}\pi|$. (Notice at once that 
\begin{multline*}
D^c=\biggl\{\bold x\in [-\pi,\pi]^{\nu}: a(\bold x)=\nu,\sum_{\a\in [\nu]}\xi_{\a}^2<\tfrac{\log^2 n}{n}\biggr\}\\\
=\biggl\{\bold x\in [-\pi,\pi]^{\nu}:\sum_{\a\in [\nu]}\xi_{\a}^2< \tfrac{\log^2 n}{n}\biggr\},
\end{multline*}
the last equality holding if $\tfrac{\log^2n}{n}\le b_0(M)(=O(b))$.) For $M=e^{O(n)}\to\infty$, and $N=n/2$, within factor $c(b)$ we have
\begin{multline}\label{3.501}
I_n:=\int\limits_{D}\big|\Bbb E\bigl[f(\bold x,X)\bigr]\big|^{n}\,d\bold x\\
\quad \le\tfrac{\nu^{2N}}{M^{\nu-1}}\exp\Bigl(-\tfrac{\la\log^2 N}{4\nu}\Bigr)
+\left\{\begin{aligned}
&\tfrac{(\nu e^{O(b^{-2})})^N}{M}, &&\lim\tfrac{N}{\log M}<1,\\
&\Bigl(\tfrac{\nu Ne^{O(b^{-2})}}{e\log M}\Bigr)^N,&& \lim\tfrac{N}{\log M}\in (1,\nu-1),\\
&\tfrac{\bigl(\nu(\nu-1)e^{O(b^{-2})}\bigr)^N}{M^{\nu-1}},&&\lim\tfrac{N}{\log M}>\nu-1.\end{aligned}\right.
\end{multline}
\end{lemma}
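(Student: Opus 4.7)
The plan is to set $N=n/2$, write $|\Bbb E[f(\bold x,X)]|^n=(|\Bbb E[f(\bold x,X)]|^2)^N$ and apply Lemma~\ref{lem2.5} pointwise, splitting $I_n=\int_{D_2}+\int_{D_1}$ and treating each piece in tube coordinates around the admissible lines $\mathcal L_{\bold k}$. Writing $\bold x=\bar x\cdot\bold 1+\bold u$ with $\sum_\alpha u_\alpha=0$ gives $y_\alpha(\bold x)=\nu u_\alpha$ and $d\bold x=\nu^{2-\nu}d\bar x\,dy_1\cdots dy_{\nu-1}$; inside a tube around $\mathcal L_{\bold k}$, setting $z_\alpha=y_\alpha-k_\alpha\pi$ and $\xi_\alpha=Mz_\alpha$ gives $dz_\alpha=M^{-1}d\xi_\alpha$, so each tube contributes $M^{-(\nu-1)}$ from the Jacobian, and the total $\bar x$-length summed over admissible lines is $O(1)$ by Lemma~\ref{lem1.6}.

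For $D_2$, where $a(\bold x)=\nu$, Lemma~\ref{lem2.5} reduces to $|\Bbb E[f]|^2\le\nu^2\exp(-(\lambda/\nu)\sum_\alpha\xi_\alpha^2)$, and I would split $\sum_\alpha\xi_\alpha^2\ge\tfrac12\log^2 n/n+\tfrac12\sum_\alpha\xi_\alpha^2$ via the $D_2$-floor to extract the fixed factor $\exp(-\lambda\log^2 N/(4\nu))$. The remaining Gaussian on the hyperplane $\{\sum_\alpha\xi_\alpha=0\}$ integrates to $O(N^{-(\nu-1)/2})$; combined with the Jacobian and the summed line lengths, this produces the first term of \eqref{3.501}.

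For $D_1$ I would partition $D_1=\bigsqcup_{A\subsetneq[\nu]}D_1^A$ and, on each $D_1^A$ with $a=|A|<\nu$, apply $(X+Y)^N\le 2^{N-1}(X^N+Y^N)$ with $X=a\exp(-(\lambda/a)\sum_A\xi_\alpha^2)$ and $Y=\sum_{A^c}\min^2\{\pi/|\xi_\alpha|;b^{-1}\}$. The $X^N$-piece, bounded pointwise by $a^N$, yields $O(\binom{\nu}{a}a^N M^{-a})$ after Gaussian integration in the $A$-directions and $O(1)$ integration in the $A^c$-directions. The $Y^N$-piece is tamed by the power-mean inequality $Y^N\le(\nu-a)^{N-1}\sum_{A^c}\min^{2N}\{\pi/|\xi_\alpha|;b^{-1}\}$ together with the one-variable integral $\int_{\Bbb R}\min^{2N}\{\pi/|\xi|;b^{-1}\}d\xi=O(b^{1-2N})$ (split at $|\xi|=b\pi$), which is exponentially negligible once $b$ is chosen larger than $\sqrt\nu$. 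The three regimes in \eqref{3.501} then come from maximizing $\binom{\nu}{a}a^N M^{-a}$ over $a\in\{1,\dots,\nu-1\}$: the continuous stationary point $a^\ast=N/\log M$ yields $(N/(e\log M))^N$ when $a^\ast\in(1,\nu-1)$ (case~2), while at the endpoints one gets $M^{-1}$ at $a=1$ (case~1) and $(\nu-1)^N M^{-(\nu-1)}$ at $a=\nu-1$ (case~3). The overall $\nu^N$ pre-factor from Lemma~\ref{lem2.5} converts these into the stated $\nu^N/M$, $(\nu N/(e\log M))^N$, and $(\nu(\nu-1))^N/M^{\nu-1}$; the $e^{O(b^{-2})N}$ factor absorbs the slack from $1-x\le e^{-x}$ and the polynomial $b$-corrections.

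The hard part will be the $D_1$ bookkeeping: verifying on each $D_1^A$ that the $X^N$-contribution genuinely dominates $Y^N$, that the $A^c$-directions (restricted to $\{|z_\alpha|>b_0/M\}\cap[-\pi,\pi]$ rather than all of $\Bbb R$) still contribute only $O(1)$ each, and that the interior optimum of $a\mapsto\binom{\nu}{a}a^N M^{-a}$ is correctly pinpointed in the middle regime. A secondary subtlety is to choose $b$ simultaneously large enough for the bound $|\phi|\le b^{-1}$ in Lemma~\ref{lem2.5} and for the condition $\nu^N b^{1-2N}\to 0$ (i.e.\ $b>\sqrt\nu$); both are met by fixing $b$ as a large constant depending only on $\nu$ and absorbing the residual $b$-dependence into $c(b)$.
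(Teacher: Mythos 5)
Your $D_2$ treatment is fine and matches the paper's in spirit (extract the fixed factor $\exp(-\lambda\log^2 N/(4\nu))$ from the floor constraint, then do a Gaussian integral in $\xi$-coordinates with the $M^{-(\nu-1)}$ Jacobian). The $D_1$ treatment, however, has a genuine gap: the split $(X+Y)^N\le 2^{N-1}(X^N+Y^N)$ introduces a factor $2^{N-1}$ that can be absorbed into neither $c(b)$ (it grows with $n$) nor $e^{O(b^{-2})N}$ (which tends to $1$, not $2$, as $b\to\infty$). After the $\nu^N$ prefactor from Lemma~\ref{lem2.5}, your $X^N$-piece gives $\nu^N 2^{N-1}a^N M^{-a}$, so in the regime $\lim N/\log M>\nu-1$ you obtain $\bigl(2\nu(\nu-1)\bigr)^N M^{1-\nu}$ rather than the stated $\bigl(\nu(\nu-1)e^{O(b^{-2})}\bigr)^N M^{1-\nu}$. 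This destroys the downstream application: in part (IV)(b) of the proof of Theorem~\ref{newthm1} the lemma is used via the ratio $\bigl(\tfrac{\nu}{\nu-1}e^{-O(b^{-2})}\bigr)^N\to\infty$, and with your extra $2^N$ that ratio becomes $\bigl(\tfrac{\nu}{2(\nu-1)}\bigr)^N\to 0$ for every $\nu\ge 3$. The small-$\kappa$ regime is similarly broken, since $(2\nu)^N/M$ need not vanish when $\lim N/\log M<1/\log\nu$.

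The paper avoids this loss by fully expanding $(X+Y)^N$ with the multinomial theorem, integrating each term $\tfrac{N!}{\prod_s r_s!}\,a^{r_1}\prod_{\beta\in A^c}\min^{2r_\beta}\{\pi/|\xi_\beta|;b^{-1}\}$ separately, and then re-summing to obtain $(a+\nu b^{-2})^N=a^N e^{O(b^{-2}N)}$ exactly. You could repair your split by using a weighted Young inequality $(X+Y)^N\le(1+\delta)^N X^N+(1+\delta^{-1})^N Y^N$ with $\delta=b^{-2}$: then the $X^N$-side picks up only $e^{O(N/b^2)}$, and the $(1+b^2)^N$ on the $Y^N$-side is killed by the $b^{-2N}$ from $\int\min^{2N}$. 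Even then, notice the paper's further trick of choosing the slice coordinate $x_\mu$ adaptively with $\bold r$ — selecting $\mu\in A^c$ with $r_\mu=0$ when possible — so that exactly one $O(M)$-range variable is traded for an $O(1)$-range $x_\mu$; your phrase ``$O(1)$ integration in the $A^c$-directions'' hides the fact that each $\xi_\beta$, $\beta\in A^c$, actually ranges over an interval of length $\Theta(M)$, and only one such power of $M$ is saved by the constraint on the eliminated coordinate.
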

\begin{proof} 


{\bf (i)\/} Consider $\bold x\in D_1$, so that $|A(\bold x)|\in [0,\nu-1]$. Let $I_{n,1}$ ($I_{n,1}(a)$ resp.) denote the contribution of $D_1$ (of $D_1(a):=\{\bold x\in D_1: |A(\bold x)|=a\}$ resp.) to $\Bbb E[Z_n]$.  For $a>0$ and $\bold x\in D_1(a)$, we use \eqref{star2}
and a union-type bound to get
\begin{multline*}%
\big|\Bbb E\bigl[f(\bold x,X)\bigr]\big|^{n}\le\nu^N \sum_{A\subset [\nu]: |A|=a}\biggl[a\exp\Bigl(\!-\tfrac{\la}{a}\sum_{\a\in A}\xi_{\a}^2\Bigr)
\cdot I(\max_{\a\in A}|\xi_{\a}|\le b_0)\\
+\sum_{\be\in A^c}
\Bbb I(|\xi_{\be}|\in [b_0,\pi M])\,{\min}^2\bigl\{\tfrac{\pi}{|\xi_{\be}|}; b^{-1}\bigr\}\!\biggr]^N,\quad \xi_{\ga}=\xi_{\ga}(\bold x),\,\,\ga\in [\nu].
\end{multline*}
Each of $\binom{\nu}{a}$ terms contributes equally to the integral of the RHS over $[-\pi,\pi]^{\nu}$. Therefore, by multinomial formula
with $\bold r:=(r_1, r_{a+1},\dots, r_{\nu})$,
\begin{multline}\label{3.502}
 I_{n,1}(a)\le\nu^N\binom{\nu}{a}\sum_{\|\bold r\|=N}\frac{N!}{r_1!\,r_{a+1}!\,\cdots r_{\nu}!}\\
\times\!\!\!\int\limits_{\bold x\in [-\pi,\pi]^{\nu}}\biggl[a^{r_1}\exp\Bigl(-\tfrac{\la r_1}{a}\sum_{\a\in [a]}\xi_{\a}^2\Bigr)\cdot I(\max_{\a\in [a]}|\xi_{\a}|\le b_0)\\
\times\prod_{\be=a+1}^{\nu}
\Bbb I(|\xi_{\be}|\in[b_0,\pi M])\,{\min}^{2r_{\be}}\bigl\{\tfrac{\pi}{|\xi_{\be}|}; b^{-1}\bigr\}\biggr]\,d\bold x.
\end{multline}
Recall that 
$\{\xi_{\a}\}=\{\xi_{\a}(\bold x)\}$ is the piecewise affine function, with the same matrix of rank $\nu-1$ on each of at most 
$(2\nu-1)^{\nu}$ disjoint subsets forming a partition of $[-\pi,\pi]^{\nu}$.  
To bound the integral in \eqref{3.502} we will switch from $x_{\a}$ to $\xi_{\a}$ for all but a single component $x_{\mu}$, choosing $\mu$ {\it dependent on\/} $\bold r$.
No matter what $\mu$ is, the Jacobian factor is the same,
namely $(M^{\nu-1}\nu^{\nu-2})^{-1}$, on every subset of the resulting partition.
Let $\be\in \{a+1,\dots,\nu\}$;  if $\xi_{\be}$ is among the new variables, then--upon integration of $\bold r$-th term--variable $\xi_{\be}$ contributes a factor
\begin{equation}\label{3.503}
2\int\limits_{b_0}^{\pi M}\!\!\Bigl(\!\min\bigl\{\tfrac{\pi}{\xi}; b^{-1}\!\bigr\}\!\Bigr)^{2r_{\be}}\!\!d\xi\!\left\{\begin{aligned}
\!\!&=\!2(\pi M-b_0)\!\le\! 2 \pi M,&&r_{\be}=0,\\
\!\!&\le\!\tfrac{2(\pi b-b_0)}{b^{2r_{\be}}}+\tfrac{2\pi}{b^{2r_{\be}-1}(2r_{\be}-1)}\!\le\! \tfrac{4\pi}{b^{2r_{\be}-1}},&&r_{\be}\ge 1.\end{aligned}\right.
\end{equation}
We will control the factors for $r_{\be}\ge 1$ by choosing $b$ large. The case $r_{\be}=0$ is the crux of the matter. Since $a\ge 1$, the number of zeros in $\{r_{a+1},\dots, r_{\nu}\}$ can be as high as $\nu-a$, if $\mu\in A$, thus leading to factor $M^{\nu-a}$ in a bound
for the product of these integrals. However, we may and will select $\mu\in A^c$, if $r_{\mu}$ is one of the zeros in $\{r_{a+1},\dots, r_{\nu}\}$.  For this choice, 
$2\int_{b_0}^{\pi M} \Bigl(\min\bigl\{\tfrac{\pi}{\xi_{\mu}};b^{-1}\bigr\}\Bigr)^{2r_{\mu}} d\xi_{\mu}\approx 2\pi M$ is replaced with 
$\int_{-\pi}^{\pi} 1\, dx_{\mu}=2\pi$. Hence, the worst-case bound becomes $M^{\nu-a-1}$ instead!  Only if there are no zeros in 
$\{r_{a+1},\dots, r_{\nu}\}$ 
do we choose $\mu\in A$.

Since $\mu\le \nu$, 
we obtain
\begin{equation}\label{3.504}
\begin{aligned}
 I_{n,1}(a)&\le \nu^N\binom{\nu}{a}\tfrac{(2\nu-1)^{\nu}\nu}{M^{\nu-1}\nu^{\nu-2}}\sum_{\|\bold r\|=N}\frac{N!\, a^{r_1}}{r_1!\,r_{a+1}!\cdots r_{\nu}!} \cdot J(\bold r),\\
J(\bold r)&:=\int\limits_{x_{\mu},\,\{\xi_{\ga}\}_{\ga \neq \mu}}\!\!\!\!\exp\Bigl(-\tfrac{\la r_1}{a}\sum_{\a\in [a]}\xi_{\a}^2\Bigr)
\cdot I(\max_{\a\in [a]}|\xi_{\a}|\le b_0)\\
&\times\prod_{\be=a+1}^{\nu}
\Bbb I(|\xi_{\be}|\in[b_0,\pi M])\,{\min}^{2r_{\be}}\bigl\{\tfrac{\pi}{|\xi_{\be}|}; b^{-1}\bigr\}\,\,dx_{\mu}\prod_{\ga\neq\mu} d\xi_{\ga}.
\end{aligned}
\end{equation}
Let $\sigma(\bold r)$ stand for the number of zeros in $\{r_{a+1},\dots, r_{\nu}\}$. If $\sigma(\bold r)>0$, then we choose $\mu\in \{a+1,\dots, \nu\}$ such that $r_{\mu}=0$. Rather crudely, we have
\begin{equation*}
\int\limits_{\xi_1,\dots,\xi_a}\!\!\!\!\exp\Bigl(-\tfrac{\la r_1}{a}\sum_{\a\in [a]}\xi_{\a}^2\Bigr)\cdot I(\max_{\a\in [a]}|\xi_{\a}|\le b_0) \prod_{\a\in [a]}d\xi_{\a}\le (2b_0)^{a}=O(b^a).
\end{equation*}
Furthermore, using \eqref{3.503} and
\[
\sum_{\be\in [a+1,\,\nu]:\, r_{\be}>0}(2r_{\be}-1)=2(N-r_1)-\bigl[\nu-a-\sigma(\bold r)\bigr],
\]
we bound
\begin{multline*}
\int\limits_{x_{\mu},\,\{\xi_{\ga}\}_{\mu\neq \ga>a}}\prod_{\be=a+1}^{\nu}
\Bbb I(|\xi_{\be}|\in[b_0,\pi M])\,{\min}^{2r_{\be}}\bigl\{\tfrac{\pi}{|\xi_{\be}|}; b^{-1}\bigr\}\,\,dx_{\mu}\prod_{\mu\neq \ga> a} d\xi_{\ga}\\
\le 2\pi\cdot 2^{\nu-a-1}\prod_{\mu\neq \ga>a}\int\limits_{b_0}^{\pi M}\!\!\Bigl(\!\min\bigl\{\tfrac{\pi}{\xi}; b^{-1}\!\bigr\}\!\Bigr)
^{2r_{\ga}}\!\!d\xi\!\\
\le 2^{\nu-a}\pi (\pi M)^{\sigma(\bold r)-1} (4\pi)^{\nu-a-\sigma(\bold r)} b^{-2(N-r_1)+(\nu-a-\sigma(\bold r))}\\
\le c_1M^{\sigma(\bold r)-1} b^{-2(N-r_1)}\le c_1M^{\nu-a-1} b^{-2(N-r_1)}.\\
\end{multline*}
Here $c_1$, and $c_j$ below depend on $b$ only. So, $J(\bold r)\le c_2M^{\nu-a-1} b^{-2(N-r_1)}$ if $\sigma(\bold r)>0$. 
Likewise, if $\sigma(\bold r)=0$ then $J(\bold r)\!\le \!c_3 b^{-2(N-r_1)}\!\le\! c_3M^{\nu-a-1} b^{-2(N-r_1)}$, since
$\nu>a$. 
Therefore $J(\bold r)\le c_4M^{\nu-a-1} b^{-2(N-r_1)}$. Plugging this bound into \eqref{3.504} we have: for $a\in [1,\nu-1]$,
\begin{multline}\label{3.8new}
I_{n,1}(a)\le 
\tfrac{c_5\nu^N}{M^{\nu-1}}
 \sum_{\| \bold r\|=N}
\frac{M^{\nu-a-1} N! }{\prod\limits_s r_s!}
\cdot a^{r_1} b^{-2(N-r_1)}\\
=\tfrac{c_5\nu^N}{M^{\nu-1}}\cdot M^{\nu-a-1}\sum_{\|\bold r\|=N}\frac{N!}{ \prod\limits_s r_s!}\,a^{r_1}\prod_{s>a} b^{-2r_s}\\
\le\tfrac{c_5\nu^N}{M^{\nu-1}}\cdot M^{\nu-a-1}\bigl(a+\nu b^{-2}\bigr)^N.
\end{multline}

For $a=0$, instead of the bound \eqref{3.502} we get
\begin{multline*}
\big|\Bbb E\bigl[f(\bold x,X)\bigr]\big|^{n}\le\nu^N \biggl(\sum_{\be=1}^{\nu}
\Bbb I(|\xi_{\be}|\in [b_0,\pi M])\,{\min}^2\bigl\{\tfrac{\pi}{|\xi_{\be}|}; b^{-1}\bigr\}\!\biggr)^N\\
=\nu^N\sum_{\|\bold r\|=N}\frac{N!}{r_1!\cdots r_{\nu}!}
\prod_{\be=1}^{\nu}
\Bbb I(|\xi_{\be}|\in[b_0,\pi M])\,{\min}^{2r_{\be}}\bigl\{\tfrac{\pi}{|\xi_{\be}|}; b^{-1}\bigr\},
\end{multline*}
$\bold r=(r_1,\dots,r_{\nu})$. And, analogously to \eqref{3.8new}, we obtain 
\begin{equation}\label{3.81new}
 I_{n,1}(0)\le  \tfrac{c_6\nu^N}{M^{\nu-1}}\cdot M^{\nu-2} b^{-2N}.
\end{equation}
Why $M^{\nu-2}$? Because $\sigma(\bold r)\le\nu-1$, and if $\sigma(\bold r)=\nu-1$, then  we select $\mu$ for which $r_{\mu}=0$.

Adding \eqref{3.8new} and \eqref{3.81new}, we get
\begin{equation}\label{star'}
I_{n,1}\le 
\tfrac{c_7\nu^N}{M}\biggl(b^{-2N}+\sum_{a=1}^{\nu-1}M^{-a+1}(a+\nu b^{-2})^N\biggr).
\end{equation}
Let us simplify \eqref{star'}. For $n\to\infty$, the (log-concave) function $\psi(a):=M^{-a+1}\bigl(a+\tfrac{\nu}{b^2})^N$, $(a\in [1,\nu-1])$, attains its unique maximum at 
\begin{equation*}
a(\nu)=\left\{\begin{aligned}
&1,&&\lim\tfrac{N}{\log M}<1+\tfrac{\nu}{b^2},\\
&\tfrac{N}{\log M}-\tfrac{\nu}{b^2},&&\lim\tfrac{N}{\log M}\in (1+\tfrac{\nu}{b^2},\nu-1+\tfrac{\nu}{b^2}),\\
&\nu-1,&&\lim\tfrac{N}{\log M}>\nu-1+\tfrac{\nu}{b^2}.\end{aligned}\right.
\end{equation*}
Consequently
\begin{equation*}
\psi(a(\nu))=\left\{\begin{aligned}
&\bigl(e^{O(b^{-2})}\bigr)^N,&&\lim\tfrac{N}{\log M}<1+\tfrac{\nu}{b^2},\\
&M\Bigl(\tfrac{Ne^{O(b^{-2})}}{e\log M}\Bigr)^N,&&\lim\tfrac{N}{\log M}\in (1+\tfrac{\nu}{b^2},\nu-1+\tfrac{\nu}{b^2}),\\
&M^{-\nu+2}\bigl((\nu-1)e^{O(b^{-2})}\bigr)^N,&&\lim\tfrac{N}{\log M}>\nu-1+\tfrac{\nu}{b^2}.\end{aligned}\right.
\end{equation*}
The sum in \eqref{star'} is below $b^{-2N}+\nu \psi(a(\nu))$, and $b$ can be chosen arbitrarily 
large. So, for $M=e^{O(n)}\to\infty$ and $b$ sufficiently large, we have
\begin{equation}\label{new3.719}
I_{n,1}\le c_8\cdot\left\{\begin{aligned}
&\tfrac{\bigl(\nu e^{O(b^{-2})}\bigr)^N}{M}, &&\lim\tfrac{N}{\log M}<1,\\
&\Bigl(\tfrac{\nu Ne^{O(b^{-2})}}{e\log M}\Bigr)^N,&& \lim\tfrac{N}{\log M}\in (1,\nu-1),\\
&\tfrac{\bigl(\nu(\nu-1)e^{O(b^{-2})}\bigr)^N}{M^{\nu-1}},&&\lim\tfrac{N}{\log M}>\nu-1.\end{aligned}\right.
\end{equation}

{\bf (ii)\/} Suppose  $\bold x\in D_2$. Let $I_{n,2}$ denote the contribution of $D_2$ to $\Bbb E[Z_n]$.
We switch from $x_{\a}$ to $\xi_{\a}$ for all but a single $x_{\mu}$ such that $|\xi_{\mu}|=\min_{\a}|\xi_{\a}|$.
So, here the cube $[-\pi,\pi]^{\nu}$ is a finite disjoint union of subsets, and the choice of the new variables depends  
on a subset in such a way  that the condition $\sum_{a\in [\nu]}\xi_{\a}^2\ge \tfrac{\log^2n}{n}$ implies that
 $\sum_{a\neq \mu}\xi_{\a}^2\ge \tfrac{(\nu-1)\log^2n}{\nu n}\ge \tfrac{\log^2N}{4N}$.

Analogously to $I_{n,1}$, by Lemma \ref{lem2.5} we have 
\begin{equation*}
I_{n,2}\le \tfrac{c_1\nu^{2N}}{M^{\nu-1}}\!\!\!\!\!\!\!\!\!\!\!\!\int\limits_{\sum_{\a=1}^{\nu-1}\xi_{\a}^2\ge \tfrac{\log^2N}{4N}}
\!\!\!\!\!\!\!\!\!\!\exp\Bigl(-\tfrac{\la N}{\nu}\sum_{\a=1}^{\nu-1}\xi_{\a}^2\Bigr)\,\prod_{\a=1}^{\nu-1} d\xi_{\a}.
\end{equation*}
Here, and below, $c_j$ is independent of $b$ and $\la=\la(b)$. Using spherical coordinates, and denoting $\zeta_0=
\bigl(\tfrac{\la}{4\nu}\bigr)^{1/2}\log N$, we bound the last integral by
\begin{align*}
c_2\!\!\!\int\limits_{\rho\ge \frac{\log N}{\sqrt{4N}}}\!\!\!\exp\Bigl(-\tfrac{\la N}{\nu}\rho^2\Bigr) \rho^{\nu-2}\,d\rho
&=c_3\,(\la N)^{-(\nu-1)/2}\!\!\int\limits_{\zeta\ge \zeta_0}e^{-\zeta^2/2}\zeta^{\nu-2}\,d\zeta\\
&\le c_4\, \la^{-1} N^{-(\nu-1)/2} (\log N)^{\nu-3} \exp\Bigl(-\tfrac{\la\log^2 N}{4\nu}\Bigr).
\end{align*}
The last bound follows from integrating the LHS integral (call it $I$) by parts once, and bounding the residual integral  by $I\cdot(\nu-3)\zeta_0^{-2}$. We conclude that
\begin{equation}\label{new3.72}
 I_{n,2}\le \tfrac{c_5\,\nu^{2N}}{\la M^{\nu-1}}\exp\Bigl(-\tfrac{\la\log^2 N}{4\nu}\Bigr).
\end{equation}
Adding 
\eqref{new3.719} and \eqref{new3.72} we complete the proof of Lemma \ref{lem2.55}.
\end{proof}
}

{\bf (III)\/} {So, it remains to sharply evaluate an asymptotic contribution to the integral 
\[
\Bbb E\bigl[Z_n\bigr]
=\!\frac{1}{(2\pi)^{\nu}}\!\!\!\int\limits_{\bold x\in [-\pi,\pi]^{\nu}}\!\!\!\Bbb E^n\bigl[
f(\bold x,X) \bigr] d\bold x,
\]
that comes from $C:=[-\pi,\pi]^{\nu}\setminus D$, or more explicitly from $\bold x$'s in the (reduced) cube $[-\pi,\pi]^{\nu}$ with $\sum_{\a}\xi_{\a}^2<n^{-1}\log^2 n$,
where $\xi_{\a}=\xi_{\a}(\bold x)$ are uniquely defined by
\[
\xi_{\a}=Mz_{\a},\quad z_{\a}=y_{\a}(\bold x)-k_{\a}(\bold x)\pi,\quad |z_{\a}|=\min_{\text{even }k}|y_{\a}(\bold x)-k\pi|.
\] 
That is, $\{k_{\a}(\bold x)\}$ is an even tuple. To begin,
\begin{align*}
&\sum_{\a} \bigl(y_{\a}(\bold x)-k_{\a}(\bold x)\pi\bigr)^2=M^{-2}\sum_{\a}\xi_{\a}^2
<M^{-2}n^{-1}\log^2 n\\
&\qquad\qquad\Longrightarrow \sum_{\a}k_{\a}(\bold x)=0\Longrightarrow\sum_{\a}\xi_{\a}=0,
\end{align*}
{\it if $n$ is large enough\/}. Indeed $\sum_{\a}y_{\a}(\bold x)\equiv 0$; so if $\sum_{\a}k_{\a}(\bold x)\neq 0$, then
$\Big|\sum_{\a}k_{\a}(\bold x)\Big|\ge 2$, and for large $n$ we have a contradiction:
\[
4\pi^2\le \Bigl|\sum_{\a}(y_{\a}(\bold x)-k_{\a}(\bold x)\pi)\Bigr|^2\le\nu\sum_{\a}(y_{\a}(\bold x)-k_{\a}(\bold x)\pi)^2\le\tfrac{\nu\log^2n}{M^2 n}.
\]
Conversely, if $\{y_{\a}(\bold x)\}$ and an even tuple
$\bold k=\{k_{\a}\}$, $\bigl(\sum_{\a}k_{\a}=0\bigr)$, satisfy\linebreak $\sum_{\a} \bigl(y_{\a}(\bold x)-k_{\a}\pi\bigr)^2<M^{-2}n^{-1}\log^2 n$,
then $\sum_{\a}\xi_{\a}^2\le n^{-1}\log^2 n$. Moreover, $\xi_{\a}=M(y_{\a}(\bold x)-k_{\a}\pi)$, if $n$ is large.  Indeed,  $\xi_{\a}=M(y_{\a}(\bold x)-k_{\a}'\pi)$, where $|y_{\a}(\bold x)-k_{\a}'\pi)|=\min_{\text{even }\kappa}|y_{\a}(\bold x)-\kappa\pi|$, and the conditions
\[
\sum_{\a} \bigl(y_{\a}(\bold x)-k_{\a}\pi\bigr)^2,\quad\sum_{\a} \bigl(y_{\a}(\bold x)-k_{\a}'\pi\bigr)^2<M^{-2}n^{-1}\log^2 n,
\]
combined with the triangle inequality, imply that $\|\bold k-\bold k'\|\le \tfrac{2\log n}{ \pi M n^{1/2}}$.

 Therefore, $C=\cup_{\bold k}C_{\bold k}$ where $\bold k=\{k_{\a}\}$, with even $k_{\a}$ adding up to $0$, and
\[
C_{\bold k}:=\Bigl\{\bold x\in [-\pi,\pi]^{\nu}:\sum_{\a} \bigl(y_{\a}(\bold x)-k_{\a}\pi\bigr)^2<M^{-2}n^{-1}\log^2 n\Bigr\}.
\]
And, for large $n$, by the triangle inequality $C_{\bold k_1}\cap\, C_{\bold k_2}=\emptyset$ if $\bold k_1\neq \bold k_2$.

Consider the line $\mathcal L_{\bold k}$ given by $y_{\a}(\bold x)=k_{\a}\pi$, $(\a\in [\nu])$, for any such tuple $\bold k$. 
(In the part {\bf (I)\/} we enumerated all $\mathcal L_{\bold k}$ that contain interior points of $[-\pi,\pi]^{\nu}$.) A generic $\mathcal L_{\bold k}$ is given by its parametric equation 
\begin{equation}\label{new3.71}
x_{\a}(t)=\nu^{-1}(t+k_{\a}\pi),\quad \a\in [\nu].
\end{equation}
The lines are parallel to each other, running in the direction of $\bold e=(1,\dots,1)$, and crossing at $90$ degrees each of the (hyper)planes $\sum_{\a\in [\nu]}x_{\a}=t$, with $\bold x(t):=\{x_{\a}(t)\}_{\a\in [\nu]}$ being the
intersection point.  
Using 
$
x_{\a}=\nu^{-1}\Bigl(\sum_{\be}x_{\be}+y_{\a}(\bold x)\Bigr),
$
and 
\eqref{new3.71}, we obtain: if $\bold x=\{x_{\a}\}\in C_{\bold k}$ 
 then, with $t:=\sum_{\a}x_{\a}$,  
\begin{equation}\label{new3.715}
\begin{aligned}
x_{\a}-x_{\a}(t)&=\tfrac{y_{\a}(\bold x)-k_{\a}\pi}{\nu}=\tfrac{\xi_{\a}(\bold x)}{M\nu}\\
&\Longrightarrow \|\bold x-\bold x(t)\|^2=\tfrac{1}{(M\nu)^2}\sum_{\a}\xi_{\a}^2(\bold x)\le \tfrac{\log^2 n}{(M\nu)^2n}.
\end{aligned}
\end{equation}
Conversely, if $t=\sum_{\a}x_{\a}$, then $\|\bold x-\bold x(t)\|^2\le \tfrac{\log^2 n}{(M\nu)^2n}$ implies that $\bold x\in C_{\bold k}$.

Consider a $\nu$-dimensional cylinder $\mathcal C_{\bold k}$ enclosing the line $\mathcal L_{\bold k}$, such that each of its cross-sections formed by planes $\sum_{a}x_{\a}=t$ {\it orthogonal\/} to  $\mathcal L_{\bold k}$ is a $(\nu-1)$-dimensional (hyper)sphere in that plane, of radius $r_n:=
(M\nu)^{-1}n^{-1/2}\log n$, which is centered at the common point $\bold x(t)$ of the plane and the line $\mathcal L_{\bold k}$. We denote 
the sphere $\mathcal S(\bold x(t), r_n)$. 

The above discussion 
means the following.
If $\mathcal L_{\bold k}$ contains interior points of $[-\pi,\pi]^{\nu}$,
then $C_{\bold k}=\mathcal C_{\bold k}\cap [-\pi,\pi]^{\nu}$. And shortly we will see that the contribution of $C_{\bold k}$ to $\Bbb E[Z_n]$ is 
of exact order
$
L(\bold k)\nu^n M^{-\nu+1} n^{-\tfrac{\nu-1}{2}},
$
where $L(\bold k)\ge 2\pi \nu^{-1/2}$ is the length of line segment of $\mathcal L_{\bold k}$ in $[-\pi,\pi]^{\nu}$, see \eqref{3.016}. 

In the part {\bf (I)\/} we proved that if $\mathcal L_{\bold k}$ contains no interior points of $[-\pi,\pi]^{\nu}$, then $\mathcal L_{\bold k}$ may only touch $[-\pi,\pi]^{\nu}$ at a single point $\bold x_{\bold k}=\{x_{\bold k,\a}\}$, such that $x_{\bold k,\a_1}=\pi$ and $x_{\bold k,\a_2}=-\pi$ for some $\a_1,\a_2\in [\nu]$.  In this case, $C_{\bold k}$ is the union 
of the parallel line segments $\bold x(t)=\tilde {\bold x}_{\bold k} +t\bold e \in [-\pi,\pi]^{\nu}$, where $\sum_{\a}\tilde{\bold x}_{\bold k,\a}=\sum_{\a}x_{\bold k,\a}$, and $\|\tilde{\bold x}_{\bold k}-\bold x_{\bold k}\|\le r_n$. Using $x_{\bold k,\a_1}=\pi$ and $x_{\bold k,\a_2}=-\pi$ we obtain that $t\in [x_{\bold k,\a_2}-\tilde x_{\bold k,\a_2},
x_{\bold k,\a_1}-\tilde x_{\bold k, \a_1}]$. Since $|y|$ is convex, we see then that $|t|\le r_n$.
So, $C_{\bold k}$ is enclosed in a sub-cylinder of $\mathcal C_{\bold k}$ sandwiched between two parallel cross-sections, for $t_1=\sum_{\a} x_{\bold k,\a}-r_n$ and $t_2=\sum_{\a} x_{\bold k,\a}+r_n$, that are at distance $O(r_n)$ from each other.
Consequently, contribution of $C_{\bold k}$ to $\Bbb E[Z_n]$ is at most of order $r_n\nu^n M^{-\nu+1} n^{-\tfrac{\nu-1}{2}}$. And we know that the total number of the points $\bold x_{\bold k}$ is $\nu(2\nu-1)^{\nu}$, at most. So, 
the contribution to $\Bbb E[Z_n]$ from the sets $C_{\bold k}$ with the lines
$\mathcal L_{\bold k}$ merely touching the cube $[-\pi,\pi]^{\nu}$ is relatively negligible as $n\to\infty$. 
Therefore, we focus on $C_{\bold k}$ with lines $\mathcal L_{\bold k}$ enumerated in  {\bf (I)\/}. 
 
For $\bold x\in \mathcal S(\bold x(t), r_n)$, we set $\xi_{\a}=M(y_{\a}(\bold x)-k_{\a}\pi)=M\nu(x_{\a}-x_{\a}(t))$, so that $\sum_{\a}\xi_{\a}^2\le n^{-1}\log ^2n$ and $\sum_{\a}\xi_{\a}=0$. Then
\begin{multline*}
\Bbb E\bigl[
f(\bold x,X) \bigr] =\sum_{\a\in [\nu]} \Bbb E\bigl[e^{i y_{\a}(\bold x)X}\bigr]=\sum_{\a\in [\nu]} \Bbb E\bigl[e^{i \xi_{\a}M^{-1}X}\bigr]\\
=\sum_{\a\in [\nu]}\Bigl[1+i\tfrac{\Bbb E[X]}{M}\xi_{\a}-\tfrac{\Bbb E[X^2]}{2M^2}\xi_{\a}^2+O\bigl(|\xi_{\a}|^3\tfrac{\Bbb E[X^3]}{M^3}\bigr)\Bigr]\\
=\nu-\tfrac{\Bbb E[X^2]}{2M^2}\sum_{\a\in [\nu]}\xi_{\a}^2+O\Bigl(\tfrac{\Bbb E[X^3]}{M^3}\sum_{\a\in [\nu]}|\xi_{\a}|^3\Bigr)\\
=\nu\exp\Bigl(-\tfrac{c_{\scriptscriptstyle M}}{2\nu}\sum_{\a\in [\nu]}\xi_{\a}^2+O(n^{-3/2}\log^3n)\Bigr)\\
=\nu\exp\Bigl(-\tfrac{c_{\scriptscriptstyle M}}{2\nu}(M\nu)^2\|\bold x-\bold x(t)\|^2+O(n^{-3/2}\log^3n)\Bigr);
\end{multline*}
here $c_{\scriptscriptstyle M}=\tfrac{\Bbb E[X^2]}{M^2}$, $t=\sum_{\a}x_{\a}$.  So, uniformly for $\bold x\in \mathcal S(\bold x(t),r_n)$, we have
\begin{equation}\label{new3.716}
\Bbb E^n\bigl[f(\bold x,X) \bigr] =\nu^n\exp\Bigl(-\tfrac{c_{\scriptscriptstyle M}n}{2\nu}(M\nu)^2\|\bold x-\bold x(t)\|^2+O(n^{-1/2}\log^3n)\Bigr).
\end{equation}
With $\rho_n:=n^{-1/2}\log n\to 0$, admissibility issue arises when $|t-t_j|=O(M^{-1}\rho_n)$, where $t_1:=-\pi(\nu+\min k_{\a})$ and $t_2:=\pi(\nu-\max k_{\a})$ are the endpoints of $t$'s range.
Ignoring this lack of complete homogeneity, and using orthogonality of $\mathcal S(\bold x(t), r_n)$ and $\mathcal L_{\bold k}$, we replace the contribution of $C_{\bold k}$ to the integral representing $E[Z_n]$ with the integral of the RHS in \eqref{new3.716} over the sphere $\mathcal S(\bold x(t), r_n)$, times $C_{\bold k}$'s length
\[
L(\bold k):=\|\bold x(t_2)-\bold x(t_1)\|=\nu^{-1/2} \pi\bigl(2\nu+\min_{\a}k_{\a}-\max_{\a}k_\a\bigr)\ge 2\pi\nu^{-1/2},
\]
see \eqref{3.016}.} So, introducing $\mathcal S(\bold 0,\rho)$, the $(\nu-1)$-dimensional sphere of radius $\rho$, centered at the origin $\bold 0\in E^{\nu-1}$, and using
\[
\text{Volume}\bigl(\mathcal S(\bold 0,\rho)\bigr)=\tfrac{\pi^{\frac{\nu-1}{2}}\rho^{\nu-1}}{\Gamma(\tfrac{\nu-1}{2}+1)},
\]
we obtain: within factor $1+o(1)$, contribution of $C_{\bold k}$ to 
$\Bbb E[Z_n]$ is
\begin{multline*}
\frac{\nu^n L(\bold k)}{(2\pi)^{\nu}(M\nu)^{\nu-1}}\int_{\mathcal S(\bold 0, \rho_n)}e^{-\frac{c_M n}{2\nu}
\rho^2}\,dV\\
=\frac{\nu^n L(\bold k)}{(2\pi)^{\nu}(M\nu)^{\nu-1}}\cdot\frac{\pi^{\tfrac{\nu-1}{2}}\tfrac{\nu-1}{2}}{\Gamma\bigl(\tfrac{\nu-1}{2}+1\bigr)}\bigl(\tfrac{2\nu}{c_Mn}\bigr)^{\tfrac{\nu-1}{2}}
\int_0^{\tfrac{c_Mn\rho_n^2}{\nu}}e^{-z} z^{\tfrac{\nu-1}{2}-1}\,dz\\
=(1+o(1))\frac{\nu^n}{M^{\nu-1}}\cdot\frac{L(\bold k)}{2\pi(2\pi\nu c_{\scriptscriptstyle M}n)^{\tfrac{\nu-1}{2}}}.
\end{multline*}
Indeed, the second line integral is asymptotic to $\Gamma(\tfrac{\nu-1}{2})$, and for the third line we used $\Gamma(z+1)=z\Gamma(z)$. (This estimate confirms our earlier claim concerning the contribution of 
every such $C_{\bold k}$ to $\Bbb E[Z_n]$.)
Combining the estimate with Lemmas \ref{lem1.6} and \ref{lem2.55}, we have: 
\begin{equation}\label{clear1}
\Bbb E\bigl[Z_n\bigr]=(1+o(1))\frac{\nu^n}{M^{\nu-1}}\cdot\frac{\nu^{\nu-3/2}}{(2\pi\nu c_{\scriptscriptstyle M} n)^{\frac{\nu-1}{2}}}+O(\mathcal R_n),\\
\end{equation}
where (with $N=n/2$)
\begin{multline}\label{new3.717}
\mathcal R_n=\tfrac{\nu^{n}}{M^{\nu-1}}\exp\Bigl(-\tfrac{\la\log^2 N}{4\nu}\Bigr)\\
+\left\{\begin{aligned}
&\tfrac{\bigl(\nu e^{O(b^{-2})}\bigr)^N}{M}, &&\lim\tfrac{N}{\log M}<1,\\
&\Bigl(\tfrac{\nu Ne^{O(b^{-2})}}{e\log M}\Bigr)^N,&& \lim\tfrac{N}{\log M}\in (1,\nu-1),\\
&\tfrac{\bigl(\nu (\nu-1)e^{O(b^{-2})}\bigr)^N}{M^{\nu-1}},&&\lim\tfrac{N}{\log M}>\nu-1.\end{aligned}\right.\\
\end{multline}
We will refer to this vertical stack as top, middle, and bottom remainders.

{\bf (IV)\/} Let us have a close look at the estimate above. 

{\bf (a)\/} $\nu=3$. Suppose that $\lim\tfrac{N}{\log M}\in 
\bigl(\tfrac{1}{\log 3},\infty\bigr)\setminus\{1,2\}$. Then $\tfrac{3^{2N}}{M^2}$ grows exponentially fast, hence 
$\tfrac{3^{2N}}{M^2}\gg\tfrac{\bigl(3e^{O(b^{-2})}\bigr)^N}{M}$ (top remainder term in \eqref{new3.717}), for $b$ chosen sufficiently large. 
So, if $\lim\tfrac{N}{\log M}\in\bigl(\tfrac{1}{\log 3}, 1\bigr)$,  then the above formula for $\Bbb E\bigl[Z_n\bigr]$ implies that 
\begin{equation}\label{newclose}
\Bbb E\bigl[Z_n\bigr]=(1+o(1))\frac{\nu^n}{M^{\nu-1}}\cdot\frac{\nu^{\nu-3/2}}{(2\pi\nu c_{\scriptscriptstyle M} n)^{\frac{\nu-1}{2}}}
\Big|_{\nu=3}\to\infty.
\end{equation}
Let $\lim\tfrac{N}{\log M}\in (1,2)$;  \eqref{newclose} continues to hold for $b$ large enough, provided that
\[
\tfrac{3^{2N}}{M^2}\gg \Bigl(\tfrac{\nu Ne^{O(b^{-2})}}{e\log M}\Bigr)^N,
\]
middle remainder in \eqref{new3.717}. This inequality holds if  $\rho:=\lim \tfrac{\log M}{N}\in (0.5,1) $ satisfies $(3e^{-\rho})^2>
\tfrac{3}{e\rho}$. And it is indeed so, since
\[
\min\{9\tau e^{-2\tau}:\,\tau\in [1/2,1]\}=\tfrac{9}{e^2}>\tfrac{3}{e}.
\]
Let $\lim\tfrac{N}{\log M}\in (2,\infty)$; \eqref{newclose} again holds for $b$ large enough, since $3^n\gg 6^N$, see bottom remainder in \eqref{new3.717}. On the other hand, if $\lim\tfrac{N}{\log M}<\tfrac{1}{\log 3}$, then $\lim\tfrac{N}{\log M}<1$, and using top remainder
from \eqref{new3.717} we see that $\Bbb E\bigl[Z_n\bigr]=O\Bigl(\tfrac{(3 e^{O(b^{-2})})^N}{M}\Bigr)\to 0$, for $b$ sufficiently large. 

{\bf (b)\/} Let now $\nu>3$. \eqref{newclose} holds if $\lim\tfrac{N}{\log M}>\nu-1$. Indeed, by \eqref{new3.717}, 
\begin{align*}
&\quad\mathcal R_n=\tfrac{\nu^{2N}}{M^{\nu-1}}\exp\Bigl(-\tfrac{\la\log^2 N}{4\nu}\Bigr)+\tfrac{\bigl(\nu (\nu-1)e^{O(b^{-2})}\bigr)^N}{M^{\nu-1}},\\
&\tfrac{\nu^{2N}}{M^{\nu-1}}=\exp\Bigl[2\log M\cdot\log \nu\cdot\bigl(\tfrac{N}{\log M}-\tfrac{\nu-1}{2\log\nu}\bigr)\Bigr]\to\infty,\\
&\qquad\,\,\,\frac{\tfrac{\nu^{2N}}{M^{\nu-1}}}{\tfrac{\bigl(\nu(\nu-1)e^{O(b^{-2})}\bigr)^N}{M^{\nu-1}}}=\bigl(\tfrac{\nu}{\nu-1}e^{O(b^{-2})}\bigr)^N\to\infty,
\end{align*}
if $b$ is sufficiently large, since $\tfrac{\nu-1}{2\log\nu}<\nu-1$. 

Consider now  $\tfrac{\nu-1}{2\log\nu}< \lim\tfrac{N}{\log M}<\nu-1$. Since $\min_{\nu>3}\tfrac{\nu-1}{2\log\nu}=\tfrac{4-1}{2\log 4}>1$
(notice that $\tfrac{3-1}{2\log 3}<1$), we use middle remainder from \eqref{new3.717} to obtain
\[
\mathcal R_n=\tfrac{\nu^{2N}}{M^{\nu-1}}\exp\Bigl(-\tfrac{\la\log^2 N}{4\nu}\Bigr)+\Bigl(\tfrac{\nu Ne^{O(b^{-2})}}{e\log M}\Bigr)^N.
\]
Therefore
\begin{equation}\label{etan}
\frac{\mathcal R_n}{\tfrac{\nu^n}{M^{\nu-1}}}=\exp\Bigl(-\tfrac{\la\log^2 N}{4\nu}\Bigr)+\Bigl(\tfrac{e^{(\nu-1)\eta_n}+O(b^{-2})}{\nu e\eta_n}\Bigr)^N,
\quad \eta_n:=\tfrac{\log M}{N},
\end{equation}
i.e. $\lim\eta_n\in \bigl(\tfrac{1}{\nu-1},\tfrac{2\log\nu}{\nu-1}\bigr)$. The function $f_{\nu}(\eta):=\tfrac{e^{(\nu-1)\eta}}{\nu e\eta}$ increases with $\eta>(\nu-1)^{-1}$, and 
\[
f_{\nu}((\nu-1)^{-1})=1-\tfrac{1}{\nu}<1,\quad f_{\nu}\bigl(\tfrac{2\log\nu}{\nu-1}\bigr)=\tfrac{\nu(\nu-1)}{2e\log\nu}>1,\,\,\forall\nu>3.
\]
So, there exists a unique root $\eta(\nu)\in \bigl(\tfrac{1}{\nu-1},\tfrac{2\log\nu}{\nu-1}\bigr)$ of $f_{\nu}(\eta)-1$. Using \eqref{etan}, we conclude that for $\nu>3$ the equation 
\eqref{newclose} holds for $\tfrac{1}{\eta(\nu)}<\lim\tfrac{N}{\log M}<\nu-1$, if $b$ is chosen large enough. 

Finally, like the case $\nu=3$, $\lim \Bbb E[Z_n]=0$ if $\lim\tfrac{N}{\log M}<\tfrac{1}{\log\nu}$. This completes the proof of Theorem \ref{newthm1}. 

\end{proof}

 \section{Second order moment of the number of perfect partitions} From Theorem \ref{newthm1} we know that, for $\lim\tfrac{n}{\log M}<\tfrac{2}{\log \nu}$, we have $\Bbb E\bigl[Z_n\bigr]\to 0$, so that $\Bbb P(Z_n>0)
\le \Bbb E[Z_n]\to 0$. Furthermore, we stated the conditions on $\lim\tfrac{n}{\log M}$ under which $\lim\Bbb E[Z_n]=\infty$.
These conditions make it plausible, but certainly do not imply, that $\Bbb P(Z_n>0)$ approaches $1$, or even that $\lim \Bbb P(Z_n>0)>0$. 
In general, if the first order moment grows very fast, it may well portend that the standard deviation grows even faster. 

\begin{theorem}\label{newthm2} Suppose that $\lim\tfrac{n}{\log M}>\tfrac{2(\nu-1)}{\log[(1-2\nu^{-2})^{-1}]}$.
Then with the limiting probability $\gtrsim (1+\nu^2)^{-1}$,
$Z_n$ is of order $\Bbb E[Z_n]$, i.e. the number of perfect partitions is exponentially large.
\end{theorem}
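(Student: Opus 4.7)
The plan is to invoke the Paley--Zygmund inequality. If we can establish $\Bbb E[Z_n^2]\le (1+\nu^2+o(1))(\Bbb E Z_n)^2$, then for every $c\in (0,1)$,
\[
\Bbb P(Z_n\ge c\,\Bbb E Z_n)\ge (1-c)^2\,\frac{(\Bbb E Z_n)^2}{\Bbb E[Z_n^2]}\ge \frac{(1-c)^2}{1+\nu^2+o(1)},
\]
and letting $c\downarrow 0$ yields the asserted conclusion. The hypothesis places $\lim n/\log M$ well above the thresholds of Theorem \ref{newthm1}, so \eqref{clear1} supplies the precise form of $\Bbb E Z_n$, and the entire task reduces to bounding $\Bbb E[Z_n^2]$ via \eqref{new1.51}.

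First I would classify the maximum-modulus points of the integrand
\[
\Bbb E[f(\bold x,X)\overline{f(\bold x',X)}]=\sum_{\a,\be\in[\nu]}\phi\bigl(y_\a(\bold x)-y_\be(\bold x')\bigr).
\]
Its modulus equals $\nu^2$ precisely when every $y_\a(\bold x)-y_\be(\bold x')$ lies in $2\pi\Bbb Z$; fixing $\a$ (resp.\ $\be$) and varying $\be$ (resp.\ $\a$), together with $\sum_\a y_\a\equiv 0$, forces both $\bold x$ and $\bold x'$ onto a common \emph{type-$s$} line for some $s\in\{0,\dots,\nu-1\}$: a line on which $y_\a\equiv 2\pi s/\nu\pmod{2\pi}$ for every $\a$. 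Only $s=0$ (the even lines) appeared in the first-moment analysis, since $|\phi(2\pi s/\nu)|=O(M^{-1})$ when $s\not\equiv 0\pmod\nu$. In the second moment, however, all $\nu$ residue classes contribute, because on a same-type pair every one of the $\nu^2$ differences $y_\a-y_\be'$ is a multiple of $2\pi$, so the peak $\nu^2$ is realized.

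Near each same-type line pair I would carry out the Gaussian expansion of part (III) of Theorem \ref{newthm1}. With $\xi_\a=Mz_\a$ and $\xi'_\be=Mz'_\be$ measuring proximity to the two lines, the identities $\sum_\a z_\a=\sum_\be z'_\be=0$ make the cross terms in $\sum_{\a,\be}(z_\a-z'_\be)^2=\nu(\sum_\a z_\a^2+\sum_\be {z'_\be}^2)$ cancel, giving the factorization
\begin{equation*}
\Bbb E[f(\bold x,X)\overline{f(\bold x',X)}]=\nu^2\exp\Bigl(-\tfrac{c_{\scriptscriptstyle M}}{2\nu}\bigl(\sum_\a\xi_\a^2+\sum_\be {\xi'_\be}^2\bigr)+O(n^{-3/2}\log^3 n)\Bigr).
\end{equation*}
Raising to the $n$th power and integrating over the pair of transverse spheres reproduces the Section~3 Gaussian integral twice, so the sum over same-type pairs contributes a leading term to $\Bbb E[Z_n^2]$ proportional to $\sum_{s=0}^{\nu-1}L_s^2$, where $L_s$ is the total in-cube length of type-$s$ lines. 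Since $(\Bbb E Z_n)^2\propto L_0^2$ by Lemma \ref{lem1.6}, the required second-moment bound reduces to $\sum_s L_s^2\le (1+\nu^2)L_0^2(1+o(1))$. I would derive this from a per-residue-class refinement of the telescoping/roots-of-unity calculation \eqref{3.05}--\eqref{M=}: extracting $[z^{a\nu+s}]$ coefficients via $\sum_j z_j^{-s}S(z_j)$ in place of $\sum_j S(z_j)$, a direct calculation should give $L_s=L_0(1+o(1))$ for every $s$, so $\sum_s L_s^2=\nu L_0^2(1+o(1))\le (1+\nu^2)L_0^2(1+o(1))$.

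The main obstacle is the uniform off-dominant estimate
\[
\bigl|\Bbb E[f(\bold x,X)\overline{f(\bold x',X)}]\bigr|\le \nu^2(1-2\nu^{-2})
\]
valid on the complement of a shrinking neighborhood of the same-type line pairs. The argument should mirror \eqref{new3.5}, showing that when $(\bold x,\bold x')$ is bounded away from every same-type configuration the defect $\nu^2-|\sum_{\a,\be}\phi(y_\a-y_\be')|$ is at least $2$ (not just $1$): a single failure $y_{\a_0}-y'_{\be_0}\notin 2\pi\Bbb Z$ propagates through $2\pi$-periodicity of $\phi$ and the rigid coupling $y_\a=\nu x_\a-\sum_\gamma x_\gamma$ to an entire row of reduced terms, and the two-sided coupling in $(\a,\be)$ rules out all $\nu$ terms in that row being perfectly in phase. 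Granting this bound, the contribution from the complement is at most $[\nu^2(1-2\nu^{-2})]^n\cdot(2\pi)^{2\nu}=\nu^{2n}(1-2\nu^{-2})^n\cdot\mathrm{const}$, which is $o(\nu^{2n}/M^{2(\nu-1)})=o((\Bbb E Z_n)^2)$ exactly when $\lim n/\log M>2(\nu-1)/\log[(1-2\nu^{-2})^{-1}]$---the precise hypothesis of the theorem.
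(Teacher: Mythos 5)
Your proposal follows the same overall route as the paper: start from \eqref{new1.51}, identify the maximum-modulus manifold of the integrand, do a Gaussian expansion in tubular neighborhoods of those manifolds, show that everything else is exponentially negligible, and close with a second-moment concentration bound (Paley--Zygmund and the Cantelli bound the paper invokes are interchangeable here; both give $(1+\nu^2)^{-1}$ in the limit $c,\delta\downarrow 0$). Your \emph{type-$s$} classification of the peak lines is exactly equivalent to the paper's divisibility condition $k_{\a}-\kappa_{\be}\equiv 0\ (\mathrm{mod}\ 2\nu)$: writing $y_{\a}\equiv 2\pi s/\nu$ with $s\in\{0,\dots,\nu-1\}$ is the same residue-class bookkeeping, and you correctly observe that only $s=0$ contributes to the first moment whereas all $\nu$ classes contribute to the second. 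The cancellation $\sum_{\a,\be}(z_\a-z'_\be)^2=\nu(\|\boldsymbol z\|^2+\|\boldsymbol z'\|^2)$ via $\sum_\a z_\a=\sum_\be z'_\be=0$ is also exactly what the paper uses. So the skeleton matches the paper's.

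Where you diverge, both times you leave a key computation at the level of conjecture.

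First, the per-residue-class refinement: you propose to prove $L_s=L_0(1+o(1))$ for each $s$ and thereby bound the dominant contribution by $\sum_s L_s^2=\nu L_0^2$. The paper instead drops the coupling between the two lines and bounds the double sum by $(\sum_s L_s)^2=(L^*)^2=\nu^2L_0^2$, which is weaker but sufficient. Your estimate, if carried out, would improve the variance bound to $\Bbb E[Z_n^2]\lesssim\nu\,\Bbb E^2[Z_n]$ and hence the probability to $\gtrsim (1+\nu)^{-1}$ --- a genuine strengthening. (The sanity check is consistent: the paper computes $L^*=2\pi\nu^{\nu-1/2}=\nu L_0$, exactly what $L_s\equiv L_0$ would give.) But you do not carry out the roots-of-unity calculation, and for the theorem as stated it is unnecessary, since the cruder bound already produces $(1+\nu^2)^{-1}$.

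Second, and more importantly, your off-dominant estimate $\bigl|\Bbb E[f\overline f]\bigr|\le\nu^2(1-2\nu^{-2})$ is the crux of matching the theorem's hypothesis, and this is where your proposal has a real gap. The paper's own proof sets up $D_{\eps}$ with $\eps<\nu^{-2}$ (this restriction is forced, since $1-\eps\nu^2$ must stay nonnegative), bounds the sum by $\nu^2(1-\eps)$, and arrives at the requirement \eqref{prov} with the constant $1-\nu^{-2}$ rather than $1-2\nu^{-2}$. So to reach the theorem's stated constant one genuinely needs something like your defect-$\ge 2$ claim, and the germ of the idea is sound: since $A_{\a\be}=y_\a(\bold x)-y_\be(\bold x')$ satisfies $A_{\a_0\be_0}=A_{\a_0\be_1}-A_{\a_1\be_1}+A_{\a_1\be_0}$, a single off-diagonal failure cannot occur in isolation, so at least two of the $\nu^2$ factors $\phi(A_{\a\be})$ are damped. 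But ``the defect is at least $2$'' is not an unconditional statement; near the dominant manifold the defect is $o(1)$. What you actually need, and do not supply, is (i) a quantitative version --- when $(\bold x,\bold x')$ is at distance $\ge\delta$ from every same-type line pair, at least two entries have $|\phi|=O((M\delta)^{-1})$ --- and (ii) a separate treatment of the intermediate region where $\max\{\|\boldsymbol\xi\|,\|\boldsymbol\eta\|\}$ lies between $\log n/\sqrt n$ and $\delta M$. The paper handles (ii) with the exponential Gaussian-tail bound $\exp(-\la'\log^2 n/\nu^2)$; your proposal does not mention this region at all, and without it the argument is incomplete.

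In short: same strategy as the paper, two plausible but unproved refinements. The first is optional, the second is essential, and neither is established in your write-up.
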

\noindent {\bf Note.\/} The limiting probability is certainly below 
\[
\tfrac{1}{\nu}\!=\!\lim\Bbb P\Bigl(\sum_{j\in [n]}X_j\!\!\equiv 0 (\text{mod }\nu)\Bigr).
\]
\begin{proof} By Lemma \ref{newlem1}, we have 
\begin{equation}\label{Sec1}
\Bbb E[Z_n^2]=\frac{1}{(2\pi)^{ 2\nu}}\!\!\!\int\limits_{\bold x, \bold x'}\Bbb E^n\bigl[f(\bold x,X)
\overline{f(\bold x',X)}\bigr]\,\,d\bold x d\bold x'.
\end{equation}
Here, with $\phi(y)=\Bbb E[e^{iyX}]$, we have
\begin{multline}\label{Eg=}
\Bbb E\bigl[f(\bold x,X)
\overline{f(\bold x',X)}\bigr]=\Bbb E\biggl[\biggl(\sum_{\a\in [\nu]}\exp(iy_{\a}(\bold x)X)\biggr)
\biggl(\sum_{\be\in [\nu]}\exp(-iy_{\be}(\bold x')X)\biggr)\biggl]\\
=\Bbb E\biggl[\sum_{\a,\be \in [\nu]}\exp\bigl(i(y_{\a}(\bold x)-y_{\be}(\bold x'))X)\biggr]=\sum_{\a,\be\in [\nu]}\phi\bigl(y_{\a}(\bold x)-y_{\be}(\bold x')\bigr).
\end{multline}

Let $\eps\in (0, \nu^{-2})$, and introduce 
\[
D_{\eps}=\Bigl\{\bold x, \bold x'\in [-\pi,\pi]^{\nu}: \min_{\a,\be\in [\nu]}\big|\phi(y_{\a}(\bold x)-y_{\be}(\bold x'))\big|\le 1-\eps\nu^2\Bigr\}.
\]
For $(\bold x, \bold x')\in D_{\eps}$, we have
\[
\sum_{\a,\be\in [\nu]}|\phi\bigl(y_{\a}(\bold x)-y_{\be}(\bold x')\bigr)|\le (1-\eps\nu^2)+\nu^2-1=\nu^2(1-\eps).
\]
So, $\mathcal E[Z_n^2; D_{\eps}]$, the contribution of $D_{\eps}$ to $\Bbb E[Z_n^2]$, is at most $\nu^{2n}(1-\eps)^n$. 

Let $(\bold x,\bold x')\in D_{\eps}^c$, so that $\big|\phi(y_{\a}(\bold x)-y_{\be}(\bold x'))\big|> 1-\eps\nu^2$ for all $\a,\,\be$. If 
$y= y_{\a,\be}:=y_{\a}(\bold x)-y_{\be}(\bold x')\neq k\pi$, $k$ even, then
\begin{multline*}
|\phi(y)|=\bigg|\tfrac{e^{iy}(e^{iMy}-1)}{M(e^{iy}-1)}\bigg|
=\tfrac{\bigl(1-\cos(My)\bigr)^{1/2}}{M(1-\cos y)^{1/2}}
=\bigg|\tfrac{\sin(My/2)}{M\sin (y/2)}\biggr|> 1-\eps\nu^2.\\
\end{multline*}
Let $k_{\a,\be}(\bold x,\bold x')\pi$ be the even multiple of $\pi$ closest to $y_{\a}(\bold x)-y_{\be}(\bold x')$, and set
$z_{\a,\be}(\bold x,\bold x')=y_{\a}(\bold x)-y_{\be}(\bold x')-k_{\a,\be}(\bold x,\bold x')\pi$; then $|z_{\a,\be}(\bold x,\bold x')|\le \pi$, and 
$\phi(y_{\a}(\bold x)-y_{\be}(\bold x'))=\phi(z_{\a,\be}(\bold x,\bold x'))$. Therefore
\begin{multline*}
1-\eps\nu^2\le |\phi(y_{\a}(\bold x)-y_{\be}(\bold x'))|
=|\phi(z_{\a,\be}(\bold x,\bold x'))|\\
\le \frac{1}{M|\sin(z_{\a,\be}(\bold x,\bold x')/2)|}\le\tfrac{\pi}{M|z_{\a,\be}(\bold x,\bold x')|},
\end{multline*}
whence 
\[
|y_{\a}(\bold x)-y_{\be}(\bold x')-k_{\a,\be}(\bold x,\bold x')\pi|= |z_{\a,\be}(\bold x,\bold x')|\le\tfrac{\pi}{M(1-\eps\nu^2)}=O(M^{-1}). 
\]
Since $\sum_{\a}y_{\a}(\bold x)=\sum_{\be}y_{\be}(\bold x')=0$, we obtain then
\begin{align*}
&y_{\a}(\bold x)-\tfrac{\pi}{\nu}k_{\a}(\bold x,\bold x')=O(M^{-1}),\quad k_{\a}(\bold x,\bold x'):=\sum_{\be}k_{\a,\be}(\bold x,\bold x'),\\
&y_{\be}(\bold x')-\tfrac{\pi}{\nu}\kappa_{\be}(\bold x,\bold x')=O(M^{-1}),\quad \kappa_{\be}(\bold x,\bold x'):=\sum_{\a}k_{\a,\be}(\bold x,\bold x').
\end{align*}
Since $\nu$ is fixed and $M\to\infty$, $\tfrac{\pi}{\nu} k_{\a}(\bold x,\bold x')$ ($\tfrac{\pi}{\nu} \kappa_{\be}(\bold x,\bold x')$ resp.) is again an even multiple of $\tfrac{\pi}{\nu}$ closest to $y_{\a}(\bold x)$ ($y_{\be}(\bold x')$ resp.), whence
$k_{\a}(\bold x,\bold x')$ ($\kappa_{\be}(\bold x,\bold x')$ resp.) depends only on $\bold x$ (on $\bold x'$ resp.). 
Combining the last three equations, and again using $\sum_{\a}y_{\a}(\bold x)=0$, $\sum_{\be}y_{\be}(\bold x')=0$, we obtain 
\begin{align*}
&\tfrac{\pi}{\nu}\cdot (k_{\a}(\bold x)-\kappa_{\be}(\bold x'))=\pi k_{\a,\be}(\bold x,\bold x')+O(M^{-1}),\\
&\sum_{\a}k_{\a}(\bold x)=O(M^{-1}),\quad \sum_{\be}\kappa_{\be}(\bold x')=O(M^{-1}),
\end{align*}
implying that for $M$ large
\[
k_{\a,\be}(\bold x,\bold x')=\tfrac{1}{\nu} (k_{\a}(\bold x)-\kappa_{\be}(\bold x')),\quad \sum_{\a}k_{\a}(\bold x)=\sum_{\be}\kappa_{\be}(\bold x')=0.
\]
(In particular, all $\nu^2$ differences $k_{\a}(\bold x)-\kappa_{\be}(\bold x')$ are divisible by $2\nu$.) The first identity above implies that $z(\bold x,\bold x')=M^{-1}\bigl(\xi_{\a}(\bold x)-\eta_{\be}(\bold x')\bigr)$, where
\[
\xi_{\a}(\bold x):=M\bigl(y_{\a}(\bold x)-\tfrac{\pi}{\nu}k_{\a}(\bold x)\bigr),\quad \eta_{\be}(\bold x'):=
M\bigl(y_{\be}(\bold x')-\tfrac{\pi}{\nu}k_{\be}(\bold x')\bigr),
\]
and by the second identity, $\sum_{\a}\xi_{\a}(\bold x)=0$, $\sum_{\be}\eta_{\be}(\bold x')=0$. So,
\[
\phi(z_{\a,\be}(\bold x,\bold x'))=\phi\bigl(M^{-1}(\xi_{\a}(\bold x)-\eta_{\be}(\bold x'))\bigr).
\]
and $|\xi_{\a}(\bold x)-\eta_{\be}(\bold x')|$ is uniformly bounded for $(\bold x,\bold x')\in D_{\eps}^c$.
Now, for each $b>0$, there exists $\la=\la(b)>0$ such that $u^{-1}|\sin u|\in [1-2\la u^2, 1-\la ^2]$ for $u\in (0,b]$.
Since
\[
|\phi(z)|=\biggl|\tfrac{\sin(Mz/2)}{Mz/2}\biggr| \cdot \biggl|\tfrac{z/2}{\sin(z/2)}\biggr|,
\]
we see then that, for $(\bold x,\bold x')\in D_{\eps}^c$ and an absolute constant $\la'>0$,
\begin{multline}\label{|sum|<}
\sum_{\a,\be}|\phi\bigl(y_{\a}(\bold x)-y_{\be}(\bold x')\bigr)|\le \sum_{\a,\be}\Bigl(1-\la' (\xi_{\a}(\bold x)-
\eta_{\be}(\bold x'))^2\Bigr)\\
\le \nu^2\exp\biggl(-\tfrac{\la'}{\nu^2}\sum_{\a,\be}(\xi_{\a}(\bold x)-\eta_{\be}(\bold x'))^2\biggr)\\
=\nu^2\exp\Bigl(-\tfrac{\la'}{\nu^2}\bigl(\|\boldsymbol\xi(\bold x)\|^2+\|\boldsymbol \eta(\bold x')\|^2\bigr)\Bigr),
\end{multline}
since $\sum_{\a}\xi_{\a}(\bold x)=\sum_{\be}\eta_{\be}(\bold x')=0$. 

Let $\mathcal E[Z_n^2; D_{\eps}^c]$ be the contribution to $\Bbb E[Z_n^2]$ from $(\bold x,\bold x')\in D_{\eps}^c$  such that
\[
\max\bigl\{\|\boldsymbol\xi(\bold x)\|,\,\|\boldsymbol \eta(\bold x')\|\bigr\}\ge \tfrac{\log n}{n^{1/2}}.
\]
Using \eqref{|sum|<}, similarly to the part {\bf (ii)\/} of the proof of Lemma \ref{lem2.55}, we obtain 
\begin{equation*}
\mathcal E[Z_n^2; D_{\eps}^c]\le c_1(\nu)\tfrac{\nu^{2n}}{\la' M^{2(\nu-1)}}\exp\Bigl(-\tfrac{\la'\log^2 n}{\nu^2}\Bigr).
\end{equation*}
Consequently
\begin{equation}\label{per}
\begin{aligned}
\mathcal E[Z_n^2; D_{\eps}]+\mathcal E[Z_n^2; D_{\eps}^c] &=O\biggl(\nu^{2n}(1-\eps)^n+\tfrac{\nu^{2n}}{\la' M^{2(\nu-1)}}\exp\Bigl(-\tfrac{\la'\log^2 n}{\nu^2}\Bigr)\biggr)\\
&=O\biggl(\tfrac{\nu^{2n}}{\la' M^{2(\nu-1)}}\exp\Bigl(-\tfrac{\la'\log^2 n}{\nu^2}\Bigr)\biggr),
\end{aligned}
\end{equation}
provided that 
\begin{equation}\label{prov}
\lim\tfrac{n}{\log M}>\frac{2(\nu-1)}{\log\tfrac{1}{1-\nu^{-2}}}.
\end{equation}
It remains to sharply estimate $\mathcal E[Z_n^2;\mathcal C]$, the contribution to $\Bbb E[Z_n^2]$ from 
\[
\mathcal C^*:=\Bigl\{(\bold x,\bold x')\in D_{\eps}^c:\max\bigl\{\|\boldsymbol\xi(\bold x)\|,\,\|\boldsymbol \eta(\bold x')\|\bigr\}\le \tfrac{\log n}{n^{1/2}}\Bigr\}.
\]
Analogously to $\Bbb E[Z_n]$, (see the part {\bf (III)\/} of the proof of Theorem \ref{newthm1}), $\mathcal C$ is the disjoint union of $\mathcal C^*_{\bold k,\boldsymbol\kappa}$,
over all pairs $\{\bold k,\boldsymbol\kappa\}$ of $\nu$-long even tuples $\bold k=\{k_{\a}\}$, $\boldsymbol\kappa=\{\kappa_{\be}\}$, with $\sum_{\a}k_{\a}=\sum_{\be}\kappa_{\be}=0$, $k_{\a}-\kappa_{\be}\equiv 0(\text{mod }2\nu)$, and
\begin{align*}
\mathcal C^*_{\bold k,\boldsymbol\kappa}=\Bigl\{(\bold x,\bold x')\in D_{\eps}^c:& \max\bigl\{\|\boldsymbol\xi(\bold x)\|,
\|\boldsymbol \eta(\bold x')\|\bigr\}\\
&\le \tfrac{\log n}{n^{1/2}},\, k_{\a}(\bold x)\!=k_{\a},\, \kappa_{\be}(\bold x')\!=\!\kappa_{\be}\Bigr\}.
\end{align*}
Geometrically, $\mathcal C^*_{\bold k,\boldsymbol\kappa}$ is the Cartesian product of two thin $\nu$-dimensional cylinders, enclosing respectively the line $\mathcal L^*_{\bold k}$ and the line $\mathcal L^*_{\boldsymbol\kappa}$,
given by the parametric equations 
\begin{equation}\label{par}
x_{\a}(t)=\nu^{-1}\bigl(t+\tfrac{\pi}{\nu}k_{\a}\bigr),\,\a\in [\nu],\quad x'_{\be}(t')=\nu^{-1}\bigl(t'+\tfrac{\pi}{\nu}\kappa_{\be}\bigr),\,\be\in [\nu].
\end{equation}
As in the case of $\Bbb E[Z_n]$, the dominant contributors to $\Bbb E[Z_n^2]$ are the pairs $\{\mathcal L^*_{\bold k},
\mathcal L^*_{\boldsymbol\kappa}\}$ that contain interior points of their respective cubes $[-\pi,\pi]^{\nu}$. However this time we need to consider only the pairs for which the differences $k_{\a}-\kappa_{\be}$ are all divisible by $2\nu$. Each of the cross-sections of the two cylinders formed by planes $\sum_{\a}x_{\a}=t$, 
and $\sum_{\be}x'_{\be}=t'$ respectively, is the $(\nu-1)$-dimensional sphere of radius $r_n:=(M\nu)^{-1}n^{-1/2}\log n$, coming from
\begin{align*}
\|\bold x-\bold x(t)\|^2&=\tfrac{1}{(M\nu)^2}\sum_{\a}\xi_{\a}^2(\bold x)\le \tfrac{\log^2n}{(M\nu)^2n},\\
\|\bold x'-\bold x'(t')\|^2&=\tfrac{1}{(M\nu)^2}\sum_{\be}\eta_{\be}^2(\bold x')\le \tfrac{\log^2n}{(M\nu)^2n}.
\end{align*}
For $(\bold x,\bold x')\in \mathcal C^*_{\bold k,\boldsymbol\kappa}$, we have 
\begin{multline*}
\phi(y_{\a}(\bold x)-y_{\be}(\bold x'))=\phi(z_{\a,\be}(\bold x,\bold x'))=\phi\bigl(M^{-1}(\xi_{\a}(\bold x)-\eta_{\be}(\bold x'))\bigr)\\
=1+i\tfrac{\Bbb E[X]}{M}(\xi_{\a}(\bold x)-\eta_{\be}(\bold x'))-\tfrac{\Bbb E[X^2]}{2M^2}(\xi_{\a}(\bold x)-\eta_{\be}(\bold x'))^2+O\bigl(|\xi_{\a}(\bold x)-\eta_{\be}(\bold x')|^3\bigr).
\end{multline*}
Summing this equation over $\a,\be \in [\nu]$, and using $\sum_{\a}\xi_{\a}(\bold x)=0$, $\sum_{\be}\eta_{\be}(\bold x')=0$, we transform \eqref{Eg=} into
\begin{multline*}
\Bbb E\bigl[f(\bold x,X)\overline{f(\bold x',X)}\bigr]=\nu^2-\tfrac{\Bbb E[X^2]}{2M^2}\bigl(\|\boldsymbol\xi(\bold x)\|^2+
\|\boldsymbol \eta(\bold x')\|^2\bigr)\\
\quad+O\bigl(\|\boldsymbol\xi(\bold x)\|^3+
\|\boldsymbol \eta(\bold x')\|^3\bigr)\\
=\nu^2\Bigl(1-\tfrac{M^2c_M}{2}\bigl(\|\bold x-\bold x(t)\|^2+\|\bold x'-\bold x'(t')\|^2\bigr)\\
+O\bigl(M^3\bigl(\|\bold x-\bold x(t)\|^3+\|\bold x'-\bold x'(t')\|^3\bigr)\Bigr)\\
=\nu^2\exp\Bigl(-\tfrac{M^2c_M}{2}\bigl(\|\bold x-\bold x(t)\|^2+\|\bold x'-\bold x'(t')\|^2\bigr)+O(n^{-3/2}\log^3 n)\Bigr).
\end{multline*}
So
\begin{multline*}
\Bbb E^n\bigl[f(\bold x,X)\overline{f(\bold x',X)}\bigr]\\
=\nu^{2n}\exp\Bigl(-\tfrac{M^2nc_M}{2}\bigl(\|\bold x-\bold x(t)\|^2+\|\bold x'-\bold x'(t')\|^2\bigr)+O(n^{-1/2}\log^3 n)\Bigr).
\end{multline*}
So $\mathcal E\bigl[Z_n^2;\mathcal C^*_{\bold k,\boldsymbol{\kappa}}\bigr]$, the contribution of $\mathcal C^*_{\bold k,\boldsymbol{\kappa}}$ to $\Bbb E[Z_n^2]$, is given by
\[
\mathcal E\bigl[Z_n^2;\mathcal C^*_{\bold k,\boldsymbol{\kappa}}\bigr]=(1+o(1))\tfrac{\nu^{2n}L^*(\bold k)L^*(\boldsymbol\kappa)}
{(2\pi)^{2\nu}(M\nu)^{2(\nu-1)}}\biggl(\int_{\rho\ge 0}e^{-\tfrac{c_Mn}{2\nu}\rho^2}\,dV(\rho)\biggr)^2,
\]
$V(\rho)$ being the volume of $(\nu-1)$-dimensional sphere of radius $\rho$. Here $L^*(\bold k)$ and 
$L^*(\boldsymbol\kappa)$ are the lengths of segments of $\mathcal L^*(\bold k)$ and $\mathcal L^*(\boldsymbol\kappa)$ within the respective cubes $[-\pi,\pi]^{\nu}$. Summing over all admissible pairs $\{\bold k,\boldsymbol\kappa\}$, we have
\[
\mathcal E\bigl[Z_n^2;\mathcal C^*\bigr]\!=\!(1+o(1))\biggl(\!\tfrac{\nu^n}{(2\pi)^{\nu}(M\nu)^{\nu-1}}\int_{\rho\ge 0}e^{-\tfrac{c_Mn}{2\nu}\rho^2}\,dV(\rho)\!\biggr)^2 \sum_{\{\bold k,\boldsymbol\kappa\}}\!L^*(\bold k) L^*(\boldsymbol\kappa).
\]
We upper bound the last product by $(L^*)^2$, $L^*:=\sum_{\bold k}L^*(\bold k)$, thus replacing the condition that 
all $k_{\a}-\kappa_{\be}$  are divisible by $2\nu$ with the weaker condition that {\it marginally\/} all the differences $k_{\a}-k_{\a'}$ and all the differences $\kappa_{\be}-\kappa_{\be'}$ are divisible by $2\nu$. 

By \eqref{par}, for the interior segment of $\mathcal L^*(\bold k)$ we have 
\[
-\pi\bigl(\nu+\tfrac{\min k_{\a}}{\nu}\bigr)<t< \pi\bigl(\nu-\tfrac{\max k_{\a}}{\nu}\bigr).
\]
So,
\[
L^{*}(\bold k)=\pi d^*(\bold k)\nu^{-1/2},\quad d^*(\bold k):=2\nu-\tfrac{r}{\nu}, \,\,r:=\max k_{\a}-\min k_{\a},
\]
and $d^*(\bold k)>0$ if either $k_{\a}\equiv 0$, or $\tfrac{r}{2\nu}\in [\nu-1]$. Consequently
\begin{align*}
\sum_{\bold k}d^*(\bold k)&=2\nu(1+\mathcal M_1^*)-\tfrac{1}{\nu}\mathcal M_2^*,\\
\mathcal M_1^*&=\sum_{\tfrac{r}{2\nu}\in [\nu-1]}\mathcal N^*(r),\quad \mathcal M_2^*=\sum_{\tfrac{r}{2\nu}\in [\nu-1]}r\mathcal N^*(r),
\end{align*}
Here, following closely the line enumeration in the part {\bf (I)\/} of the proof of Theorem \ref{newthm1}, we have
\begin{align*}
&\quad\quad\quad\mathcal N^*(r)=\sum_{\text{even }a\le 0}[\zeta^{-a\nu}]\mathcal F^*_r(\zeta),\quad
\tfrac{r}{2\nu}\in [\nu-1], \\
&\mathcal F^*_r(\zeta)=
\bigl(\tfrac{1-\zeta^{r+2\nu}}{1-\zeta^{2\nu}}\bigr)^{\nu}-\bigl(\tfrac{1-\zeta^r}{1-\zeta^{2\nu}}\bigr)^{\nu}
-\bigl(\tfrac{\zeta^{2\nu}-\zeta^{r+2\nu}}{1-\zeta^{2\nu}}\bigr)^{\nu}+\bigl(\tfrac{\zeta^{2\nu}-\zeta^r}{1-\zeta^{2\nu}}\bigr)^{\nu}.
\end{align*}
So
\begin{align*}
\mathcal M_1^*&=\sum_{\text{even }a\le 0}[\zeta^{-a\nu}]\sum_{\tfrac{r}{2\nu}\in [\nu-1]}\mathcal F^*_r(\zeta)\\
&=\sum_{\text{even }a\le 0}[\zeta^{-a\nu}]\Bigl(\bigl(\tfrac{1-\zeta^{2\nu^2}}{1-\zeta^{2\nu}}\bigr)^{\nu}-\bigl(\tfrac{\zeta^{2\nu}-\zeta^{2\nu^2}}{1-\zeta^{2\nu}}\bigr)^{\nu}-1\Bigr)\\
&=\sum_{\a\ge 0}[t^{\a}]\Bigl[\bigl(\tfrac{1-t^{\nu}}{1-t}\bigr)^{\nu}-\bigl(\tfrac{t-t^{\nu}}{1-t}\bigr)^{\nu}-1\Bigr]\\
&=\lim_{t\to 1}\Bigl[\bigl(\tfrac{1-t^{\nu}}{1-t}\bigr)^{\nu}-\bigl(\tfrac{t-t^{\nu}}{1-t}\bigr)^{\nu}-1\Bigr]
= \nu^{\nu}-(\nu-1)^{\nu}-1.
\end{align*}
Next, with a bit of telescoping again,
\begin{multline*}
\sum_{\tfrac{r}{2\nu}\in [\nu-1]}r\mathcal F^*_r(\zeta)=2\nu(\nu-1)\bigl(\tfrac{1-\zeta^{2\nu^2}}{1-\zeta^{2\nu}}\bigr)^{\nu}\\
-(2\nu(\nu-1)\zeta^{2\nu^2}+2\nu)\bigl(\tfrac{1-\zeta^{2\nu(\nu-1)}}{1-\zeta^{2\nu}}\bigr)^{\nu}\\
+2\nu(\zeta^{2\nu^2}-1)\sum_{j=1}^{\nu-2}\bigl(\tfrac{1-\zeta^{2\nu j}}{1-\zeta^{2\nu}}\bigr)^{\nu}=:S^*(\zeta^{2\nu}).
\end{multline*}
Consequently,
\begin{align*}
\mathcal M_2^*&=\sum_{\text{even }a\le 0}[\zeta^{-a\nu}]\sum_{\tfrac{r}{2\nu}\in [\nu-1]}r\mathcal F^*_r(\zeta)
=\sum_{\a\ge 0}[t^{\a}] S^*(t)\\
&=\lim_{t\to 1}S^*(t)=2\nu(\nu-1)\nu^{\nu}-2\nu^2(\nu-1)^{\nu}.
\end{align*}
Therefore
\begin{align*}
\sum_{\bold k}d^*(\bold k)&=2\nu(1+\mathcal M_1^*)-\tfrac{1}{\nu}\mathcal M_2^*\\
&=2\nu(\nu^{\nu}-(\nu-1)^{\nu})-2(\nu-1)\nu^{\nu}+2\nu(\nu-1)^{\nu}=2\nu^{\nu},
\end{align*}
so that $L^*(\bold k)=2\pi\nu^{\nu-1/2}$. So,
\begin{align*}
\mathcal E\bigl[Z_n^2;\mathcal C^*\bigr]\!&\le\!(1+o(1))\biggl(\!\tfrac{\nu^n \cdot2\pi\nu^{\nu-1/2}}{(2\pi)^{\nu}(M\nu)^{\nu-1}}\int_{\rho\ge 0}e^{-\tfrac{c_Mn}{2\nu}\rho^2}\,dV(\rho)\!\biggr)^2\\
&=(1+o(1))\left(\tfrac{\nu^n}{M^{\nu}-1}\cdot\frac{\nu^{\nu-1/2}}{(2\pi\nu c_{M}n)^{\tfrac{\nu-1}{2}}}\right)^2.
\end{align*}
In combination with \eqref{per}, and \eqref{clear1}, this leads to 
\begin{align*}
\Bbb E[Z_n^2]&\le (1+o(1))\left(\tfrac{\nu^n}{M^{\nu}-1}\cdot\frac{\nu^{\nu-1/2}}{(2\pi\nu c_{M}n)^{\tfrac{\nu-1}{2}}}\right)^2\\
&=(1+o(1)) \nu^2 \Bbb E^2[Z_n],
\end{align*}
provided that $\lim\tfrac{n}{\log M}>\tfrac{2(\nu-1)}{\log[(1-2\nu^{-2})^{-1}]}$.
By Cantellli's inequality (Billingsley \cite{Bil}), 
\begin{multline*}
\liminf\Bbb P\bigl(Z_n\ge \delta\Bbb E[Z_n]\bigr)\ge \lim \tfrac{(1-\delta)^2}{(1-\delta)^2+
\tfrac{\Bbb E[Z^2_n]}{\Bbb E^2[Z_n]}}\ge \tfrac{(1-\delta)^2}{(1-\delta)^2+\nu^2},\quad\forall\,\delta\in (0,1).\\
\end{multline*}
\end{proof}
\noindent {\bf Acknowledgment.\/} It is my pleasure to thank the participants of combinatorial probability seminar at Ohio State University for
encouraging discussion of this study. I owe a debt of gratitude to Huseyin Acan for alerting me to a possibility of an oversight in the
preliminary draft, and to Hoi Nguyen for pinpointing it with surgical precision. I am grateful to George Varghese for drawing my attention in Summer 2021 to the perfect multiway partitions. The thought-provoking paper that George and  his son Tim wrote on the subject slowly but surely pulled me in.

\end{document}